\documentclass[final,authoryear]{elsarticle}

\usepackage{lineno,hyperref}
\modulolinenumbers[5]

\usepackage{mathtools}

\usepackage{todonotes}

\bibliographystyle{elsarticle-harv}

\newcommand{\supp}[1]{\ensuremath{\text{Support}{(#1)}}}

\usepackage{natbib}
\usepackage{amsmath,mathtools}
\usepackage{dsfont}
\usepackage{amssymb}
\usepackage{xcolor}

\usepackage{graphics, tikz}
\usetikzlibrary{arrows,automata}
\usepackage[latin1]{inputenc}
\usepackage{pgf}

\usepackage{todonotes}

\newtheorem{theorem}{Theorem}
\newtheorem{lemma}[theorem]{Lemma}
\newtheorem{corollary}[theorem]{Corollary}

\newproof{proof}{Proof}
\newtheorem{remark}{Remark}
\newtheorem{definition}{Definition}
\newtheorem{example}{Example}
\newcommand{\comm}[1]{}

\numberwithin{equation}{section}

\newcommand{\ignore}[1]{ }

\begin{document}

\begin{frontmatter}

\title{Evolutionary Stability of Polymorphic Population States in Continuous Games}

\author[DH]{Dharini Hingu\corref{corresponding author}}
\cortext[corresponding author]{Corresponding author}
\address[DH]{Department of Mathematics, Indian Institute of Technology Madras, Chennai 600036, India.}
\ead{dharinihingu@gmail.com}

\author[KSMR]{K.S. Mallikarjuna Rao}
\ead{mallik.rao@iitb.ac.in}
\address[KSMR]{Industrial Engineering and Operations Research,
Indian Institute of Technology Bombay, Mumbai 400076, India.}

\author[AJS]{A.J. Shaiju}
\address[AJS]{Department of Mathematics, Indian Institute of Technology Madras, Chennai 600036, India.}
\ead{ajshaiju@iitm.ac.in}

\begin{abstract}
In games with continuous strategy spaces, if a rest point of the replicator dynamics is asymptotically stable then the
rest point must be finitely supported (\cite{VeelenSpreij}). In this article, we address the converse question that is, we prove that a finitely supported population state is asymptotically stable with respect to the variational norm when it is strongly uninvadable.
\end{abstract}

\begin{keyword}
Evolutionary Games \sep  Continuous Action Spaces \sep  ESS \sep Replicator Dynamics \sep Stability
\JEL{C72,C73}
\end{keyword}

\end{frontmatter}

\linenumbers

\section{Introduction}

The growing interest in evolutionary games with continuous strategy spaces is primarily because of
the fact that many applications in  economics are modeled as evolutionary games with continuous strategy spaces.
Some of the important  applications include oligopoly games, bargaining games, harvest preemptive game, and war of attrition.
In these games, the strategy space is typically a compact subset of an Euclidean space.  Considering the vast literature on
evolutionary games  with finite strategy spaces (see for e.g., \cite{MaS,MSmithBook,Weibull,HofSig,Cressman,Sandholm}),
a natural approach is to approximate the strategy space by
finite sets.

\cite{OR1} have already considered such an approach for the Harvest preemption game.
It is observed  (see  Section 5.4 in \cite{OR1}) that
the limit of the ESS of the finite approximation is not ESS of the harvest preemption game.  The reason
for such a negative result is the infinite dimensional nature of the space of all mixed strategy spaces.

Evolutionary games with continuous strategy spaces were first studied  by Bomze and P\"{o}tscher through what they called as ``generalized'' mixed strategy games (\cite{Bomze}). There is a vast literature relating evolutionary stability and dynamic stability under various dynamics for games with finite strategy spaces (see for e.g., \cite{Weibull,Cressman,Sandholm,HofSig1,BomzeWeibull}).  However the literature in the continuous games is very little. In the works
\cite{Bomze,Bom,Bom1,OR1,OR2,Cressman1}, the relation is explored for replicator dynamics.
For the literature concerning other dynamics,  see \cite{Cheung2014,FriedOstrov2013,HofOechRiedel2009,LahkarRiedel2015}.

Replicator dynamics is one of the most important dynamics. In this article we study the relation between evolutionary
stability and dynamic stability under replicator dynamics. In \cite{Bomze,OR1,OR2}, such a connection has been studied for a monomorphic population states. In this article, we explore the relation for polymorphic population states.

The population states in evolutionary games with continuous strategy spaces are nothing but probability measures on its strategy space. We can define the static stability concepts of evolutionarily stable strategy (ESS) and uninvadability in these games. We can also define the replicator dynamics for these games which captures the evolution of the population over time. Certain other stability concepts can also be defined, but they depend on the notion of ``closeness" of the population states.

The closeness of the population states can be made precise using various metrics. In this article, we study the evolutionary dynamics and related stability results under the metric defined by the variational or strong norm, which gives rise to the strong topology. In particular, we can define the static stability concepts of strong uninvadability (\cite{Bom,Bom1}) and strong unbeatability (\cite{BomzeWeibull}) along with the dynamic stability concepts such as Lyapunov stability and strongly attracting population states (\cite{Bomze,OR1}). We can also define the concepts of evolutionary robustness (\cite{OR2}) and weakly attracting population states (\cite{OR1}) when we consider the metric associated with the weak topology.

Evolutionary games with continuous strategy spaces and the underlying topology as the strong topology are studied in \cite{OR1}. They connected the static and dynamic stabilities for a population state where all the individuals in the entire population play one and the same pure strategy $x$. Such a population state is called a monomorphic population state and it is represented by the Dirac measure $\delta_x$. They proved that an uninvadable monomorphic population state is Lyapunov stable. Moreover, if the initial population state is close to this monomorphic population state in the strong sense and the payoff function is continuous, then the monomorphic population state is weakly attracting.

In \cite{OR2}, the authors proved that for a doubly symmetric game with continuous payoff function and compact strategy space, an evolutionary robust population state is Lyapunov stable when the underlying topology is the weak topology. They also prove the asymptotic stability of monomorphic evolutionary robust strategies under various assumptions. Cressman also studied evolutionary games with continuous strategy games in \cite{Cressman1}. One of the main result he proves in this paper is regarding population states which have a finite support, which are also known as polymorphic population states. He proved that a dimorphic (or polymorphic) neighbourhood superior population state is neighbourhood attracting.

Bomze has proved a couple of results regarding the asymptotic stability of population states in \cite{Bom}. In the first result \cite[Theorem 2]{Bom}, one of the condition that he imposes is that the set off all population states has to be compact under the given topology. When the underlying topology is strong, this becomes a very strong imposition. We can weaken the conditions of this theorem considerably when we talk about the stability of polymorphic population states. In fact, in this article we prove that for the Lyapunov stability of the polymorphic population state, the strong unbeatability condition is sufficient.

Another result that Bomze (Theorem 3, \cite{Bom}) gives, depends on a first-order condition for strong uninvadability of a population state. This condition may not be satisfied always when the population state is strongly uninvadable. We present here with an example (Example \ref{counter example} in Section 3) where this first-order condition is not satisfied even though the population state is strongly uninvadable. In this article we also prove that in the case of polymorphic population states, strong uninvadability is a sufficient condition for asymptotic stability which is our main result (Theorem \ref{Asymptotic Stability of P^*} in Section 3).

In all the results mentioned above, to obtain dynamic stability for a population state, say $P$, the initial population state, say $Q(0)$, for the replicator dynamics is taken from a small neighbourhood of $P$. Moreover, $Q(0)$ is chosen such that its support contains the support of $P$. This is a necessary condition to study stability with respect to replicator dynamics since the replicator dynamics can only increase or decrease the frequency of the strategies which already exists at the start of the dynamics. Thus, in general, population state $P$ will be not be stable with respect to its complete neighbourhood.

In \cite{VeelenSpreij}, the authors prove that when a population state $P$ is asymptotically stable with respect to a complete (strong) neighbourhood under the replicator dynamics then $P$ should be a polymorphic population state. The results we prove in this article establish the converse of the result by van Veelen and Spreij.

The rest of the article is structured  as follows. Section 2 gives the preliminary notations, definitions and results to study continuous strategy evolutionary games with the underlying topology as the strong topology. Section 3 is divided  into two parts. In the first part we discuss some properties of polymorphic population states. In the second part, we discuss the stability of polymorphic population states. We prove that a strongly uninvadable polymorphic population state is asymptotically stable whereas for Lyapunov stability, it is enough if the population state is strongly unbeatable. Concluding remarks are given in Section 4. An Appendix is devoted to study the Lyapunov stability for differential equations in
infinite dimensional spaces.

\section{Preliminaries and Problem Description}

Let $S$ be a Polish space (i.e., complete separable metric space) with the associated metric $d$. We consider a symmetric two player game
$G = (S, u)$. Here $u : S \times S \to \mathbb{R}$ represents the payoff function, which is bounded and measurable. Recall
that in a symmetric game if a player  chooses $z \in S$ and the other player chooses  $w \in S$, then the player choosing
$z$ will get a payoff $u(z, w)$.

Let $\mathcal{B}$ denote the Borel sigma-algebra on $S$, i.e., the sigma-algebra generated by all open sets in $S$.
Following the tradition of evolutionary game theory, a population state of the game $G$ is defined to be a probability measure, $Q$, on the measurable space $(S,\mathcal{B})$. The set of all population states is denoted by $\Delta$. The average payoff to a population $P$ playing against a population $Q$ is given by
\[
E(P,Q) := \int_S \int_S u(z,w) ~Q(dw)~P(dz).
\]
We recall few definitions from evolutionary game theory.

\begin{definition}[\cite{MaS,Bomze}]
A population state $P$ is called an \emph{evolutionarily stable strategy} if for every ``mutation'' $Q \neq P$, there is an invasion barrier $\epsilon(Q) > 0$,  such that, for all $0 < \eta \leq \epsilon(Q)$,
\begin{equation}\label{ess condition}
E(P,(1-\eta)P+\eta Q) > E(Q,(1-\eta)P+\eta Q).
\end{equation}
\end{definition}

\begin{definition}[\cite{ViCa}]
A population state $P$ is called \emph{uninvadable} if, in Definition 1, $\epsilon(Q)$ can be chosen independent of $Q \in \Delta$, $Q \not = P$.
\end{definition}

Note that, we can rewrite the condition \eqref{ess condition} in the ESS definition as
\[
E(P,R) > E(R,R),
\]
where $R = (1-\eta)P+\eta Q$ for all $0 < \eta \leq \epsilon(Q)$. A neighbourhood of $P$ can be completely characterized by $R$, with $\eta$ sufficiently small, when the set of pure strategies is finite; but not when the set of pure strategies is infinite. In games with infinite strategy set, the neighbourhoods of the population state $P$ can be determined using various topologies. In this article we consider the topology generated by the variational (or strong) norm i.e., the variational (or strong) topology. The variational norm of a probability measure $P$
is given by
\[
\| P \|  = 2~ \sup_{B \in \mathcal{B}} |P(B)|.
\]
Thus the distance between two probability measures $P$ and $Q$ is given by
\[
\| P-Q \|  = 2~ \sup_{B \in \mathcal{B}} |P(B)-Q(B)|.
\]
We next define strong uninvadability and strong unbeatability.

\begin{definition}[\cite{Bom,Bom1}]
A population state $P$ is called \emph{strongly uninvadable} if there is an $\epsilon > 0$ such that for all
population states $R \neq P$ with $\| R - P \| \leq \epsilon$, we have
\[
E(P,R) > E(R,R).
\]
\end{definition}

\begin{definition}[\cite{BomzeWeibull}]
A population state $P$ is called \emph{strongly unbeatable} if there is an $\epsilon > 0$ such that for all population states $R \neq P$ with $\|R-P\| \leq \epsilon$, we have
\[
E(P,R) \geq E(R,R).
\]
\end{definition}

It can be easily seen that a strongly uninvadable state is uninvadable and an uninvadable state is an ESS (\cite{Bom1}).

We now consider the evolution of the population over time using the replicator dynamics (\cite{OR1,OR2}). To this end, we note that the success (or lack of success) of a strategy $z \in S$ against a strategy $w \in S$ is given by
\[
\sigma(z,w) := u(z,w) - u(w,w).
\]
The average success (or lack of success) of a strategy $z \in S$ against a population $Q \in \Delta$ is given by
\[
\sigma(z,Q) := \int_S u(z,w) ~ Q(dw) - \int_S \int_S u(\bar{z},\bar{w})~ Q(d\bar{w})~ Q(d\bar{z}) = E(\delta_z,Q) - E(Q,Q),
\]
where the Dirac measure $\delta_z$ represents a monomorphic population state.

The replicator dynamics is derived based on the idea that the relative increment in the frequency of strategies in a set $B\in \mathcal{B}$ is given by the average success of strategies in $B$. That is, for every $B \in \mathcal{B}$,
\begin{equation}\label{re}
 Q'(t)(B) = \frac{dQ(t)}{dt}(B) =  \int_B \sigma(z,Q(t))~Q(t)(dz)
\end{equation}
where $Q(t)$ denotes the population state at time $t$.

The replicator dynamics equation \eqref{re} can be also written as
\begin{equation}\label{red}
Q'(t) = F(Q(t)),
\end{equation}
where for every $B \in \mathcal{B}$, $F(Q(t))(B) = \int_B \sigma(z,Q(t))~Q(t)(dz)$; that is, $F(Q(t))$ is the signed measure whose Radon-Nikodym derivative $\dfrac{dF(Q(t))}{dQ(t)}$, w.r.t.\ $Q(t)$ is $\sigma(\cdot,Q(t))$.

Since the payoff function $u$ is bounded and measurable, it follows that the replicator dynamics is well posed (\cite[Theorem 2]{OR1}) which in turn assures the existence of a unique solution to the replicator dynamics \eqref{re} with the initial condition $Q(0)$.

We can now introduce a few dynamic stability definitions for population states.
Let $P$ be a rest point of the replicator dynamics, i.e., $F(P) = 0.$
\begin{definition}
Rest point $P$ is called \emph{Lyapunov stable} if for all $\epsilon > 0$, there exists an $\eta > 0$ such that,
\[
||Q(0)-P|| < \eta ~~ \Rightarrow ~ ||Q(t)-P|| < \epsilon \mbox{~~~for all~} t > 0.
\]
\end{definition}

\begin{definition}
$P$ is called \emph{strongly attracting} if there exists an $\eta > 0$ such that $Q(t)$ converges to $P$ strongly as $t \to \infty$, whenever $||Q(0) - P|| < \eta$.
\end{definition}

\begin{definition}
$P$ is called \emph{asymptotically stable} if $P$ is Lyapunov stable and strongly attracting.
\end{definition}

One of the main interest in studying games with continuous strategy spaces is to establish conditions under which the population states will be dynamically stable. We recall here some of the existing results in this direction.

\cite{OR1} provide with sufficient conditions for a monomorphic population state $Q^* = \delta_x$ to be Lyapunov stable and ``weakly attracting". More precisely, they prove the following result.

\begin{theorem}[\cite{OR1}]
If $Q^*=\delta_x$ is an uninvadable, monomorphic population state, then $Q^*$ is Lyapunov stable. Moreover, if $u$ is continuous then $Q^*$ is weakly attracting, in the sense that the trajectory w.r.t the replicator dynamics converges to $Q^*$ weakly when the initial population state is from a small (strong) neighbourhood of $Q^*$.
\end{theorem}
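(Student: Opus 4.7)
The plan is to use $V(Q) := 1 - Q(\{x\})$ as a Lyapunov function, motivated by the identity $\|Q - \delta_x\| = 2(1 - Q(\{x\}))$ that holds for the variational norm against a Dirac measure. Applying the replicator equation \eqref{re} with $B = \{x\}$ gives
\[
\frac{d}{dt} Q(t)(\{x\}) \;=\; \sigma(x, Q(t))\, Q(t)(\{x\}).
\]
For any $Q(t) \neq \delta_x$ with $\|Q(t) - \delta_x\| \leq 2\epsilon_0$, where $\epsilon_0$ is the uninvadability barrier, I would decompose $Q(t) = (1-\eta_t)\delta_x + \eta_t \tilde Q_t$ with $\eta_t = 1 - Q(t)(\{x\}) \leq \epsilon_0$ and $\tilde Q_t$ the normalised restriction of $Q(t)$ to $S \setminus \{x\}$; uninvadability applied with the mutation $\tilde Q_t \neq \delta_x$ then forces $\sigma(x, Q(t)) > 0$. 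Hence $V(Q(t))$ is strictly decreasing along trajectories inside this ball, and Lyapunov stability follows by taking initial neighbourhoods of variational radius $\eta = \min(\epsilon, 2\epsilon_0)$.

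For weak attraction, monotonicity of $V$ gives $Q(t)(\{x\}) \nearrow \alpha \in (1-\epsilon_0, 1]$. Rewriting the ODE as $\frac{d}{dt}\log Q(t)(\{x\}) = \sigma(x, Q(t))$ and integrating yields
\[
\int_0^\infty \sigma(x, Q(t))\, dt \;=\; \log\frac{\alpha}{Q(0)(\{x\})} \;<\; \infty.
\]
Boundedness of $u$ makes $\sigma(x, \cdot)$ Lipschitz in variation norm and the replicator flow Lipschitz in $t$, so $t \mapsto \sigma(x, Q(t))$ is uniformly continuous on $[0, \infty)$. Barbalat's lemma then forces the positive integrand to tend to $0$.

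To upgrade $\sigma(x, Q(t)) \to 0$ to the weak convergence $Q(t) \to \delta_x$, I would argue subsequentially. Given any weakly convergent subsequence $Q(t_n) \to Q^\infty$, continuity of $u$ together with bounded convergence gives $\sigma(x, Q^\infty) = \lim_n \sigma(x, Q(t_n)) = 0$. Closedness of the singleton $\{x\}$ and the Portmanteau theorem yield $Q^\infty(\{x\}) \geq \alpha > 1-\epsilon_0$, so $\|Q^\infty - \delta_x\| \leq 2(1 - \alpha) < 2\epsilon_0$ and $Q^\infty$ sits inside the uninvadability ball. Uninvadability would then make $\sigma(x, Q^\infty)$ strictly positive unless $Q^\infty = \delta_x$, forcing equality. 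Since every subsequential weak limit of $Q(t)$ equals $\delta_x$, we conclude $Q(t) \to \delta_x$ weakly.

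The main obstacle is securing the relative weak compactness of $\{Q(t)\}_{t \geq 0}$ needed for the subsequential argument. I plan to exploit the explicit Radon--Nikodym representation $\tfrac{dQ(t)}{dQ(0)}(z) = \exp\bigl(\int_0^t \sigma(z, Q(s))\,ds\bigr)$ coming from the replicator ODE. Continuity of $u$ together with the first-order consequence of uninvadability ($u(x,x) \geq u(y,x)$ for every $y \in S$) lets me control $\sigma(z, Q(s))$ uniformly for $z$ outside a small neighbourhood of $x$ and $Q(s)$ sufficiently close to $\delta_x$, yielding exponential decay in time of the mass placed outside such neighbourhoods; tightness of $Q(0)$ then transfers to uniform tightness of the entire trajectory, closing the argument.
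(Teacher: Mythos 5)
The paper itself states this theorem without proof --- it is quoted from \cite{OR1} in the preliminaries --- so there is no in-paper argument to compare against; I assess your proposal on its own terms.

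The Lyapunov-stability half is correct. The identity $\|Q-\delta_x\|=2(1-Q(\{x\}))$ holds, the decomposition $Q(t)=(1-\eta_t)\delta_x+\eta_t\tilde Q_t$ with $\eta_t=1-Q(t)(\{x\})$ turns the uninvadability condition (applied to the mutation $\tilde Q_t$ with barrier $\epsilon_0$) into $\sigma(x,Q(t))>0$ whenever $0<\eta_t\le\epsilon_0$, and monotonicity of $1-Q(t)(\{x\})$ then gives stability. The Barbalat step is also sound: $\int_0^\infty\sigma(x,Q(t))\,dt=\ln\bigl(\alpha/Q(0)(\{x\})\bigr)<\infty$, and boundedness of $u$ makes $t\mapsto\sigma(x,Q(t))$ Lipschitz, so $\sigma(x,Q(t))\to 0$. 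The identification of subsequential weak limits with $\delta_x$ (via Portmanteau and uninvadability) is fine \emph{provided} such limits exist.

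The gap is exactly where you flagged it, and the mechanism you offer to close it fails. The first-order condition $u(y,x)\le u(x,x)$ is not strict, and uninvadability does not exclude alternative best replies, so for $z$ outside a neighbourhood of $x$ and $Q$ near $\delta_x$ you only get $\sigma(z,Q)\approx u(z,x)-u(x,x)\le 0$, never a bound $\sigma(z,Q)\le -c<0$. Concretely, take $S=[0,1]$, $u(z,w)=-zw$, $x=0$: here $E(\delta_0,Q)-E(Q,Q)=\mu_Q^2>0$ for all $Q\ne\delta_0$ (with $\mu_Q=\int z\,dQ$), so $\delta_0$ is uninvadable, yet every $y$ is an alternative best reply to $0$ and $\sigma(z,Q)=\mu_Q(\mu_Q-z)\to 0$ as $Q\to\delta_0$; starting from $(1-\beta_0)\delta_0+\beta_0\delta_1$ the off-$x$ mass obeys $\beta'=-\beta^2(1-\beta)$ and decays only like $1/t$. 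So there is no exponential decay, and the decay that does occur is driven by the second-order (uninvadability) structure, not by the first-order condition. Worse, the statement ``mass outside every neighbourhood of $x$ tends to $0$'' \emph{is} the weak convergence $Q(t)\to\delta_x$; invoking it to manufacture the tightness needed for your subsequential argument is circular (and if you could prove it, the subsequential argument would be superfluous). To finish you need either an independent proof of uniform tightness of $\{Q(t)\}_{t\ge 0}$ or, as in \cite{OR1}, a direct argument that $Q(t)(F)\to 0$ for every closed $F\not\ni x$ which exploits the uniform invasion barrier together with $\sigma(x,Q(t))\to 0$ and the continuity of $u$, without ever extracting weak limit points. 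Your sketch supplies neither.
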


In \cite{Bom}, there are a couple of results regarding the asymptotic stability of population states under very strong assumptions. The first theorem that he gives is as follows.
\begin{theorem}[\cite{Bom}] \label{Th2 bomze}
Suppose that $\Delta$ is relatively $\tau$-compact, where $\tau$ is a topology on the L-space, $\mathcal{L}$, of $(S, \mathcal{B}, \Delta)$ such that the map $Q \mapsto \|Q\|$ form $\mathcal{L}$ to $\mathbb{R}$ is lower semicontinuous. If $P \in \Delta$ is strongly uninvadable, and if the map $Q \mapsto E(P,Q)-E(Q,Q)$ on $\Delta$ is $\tau$-continuous, then every replicator dynamics trajectory $Q(t)$, $t\geq 0$, starting in
\[
\mathcal{U}_P = \left \{ Q \in \Delta : P \ll Q~ \mbox{and}~ \int_S \ln \left ( \frac{dP}{dQ} \right ) ~dP < \delta \right \}
\]
converges to $P$ as $t \to \infty$, with respect to $\tau$, provided that $\delta > 0$ is small enough.
\end{theorem}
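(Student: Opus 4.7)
My plan is to use the relative entropy
\[
H_P(Q) := \int_S \ln\!\left(\frac{dP}{dQ}\right) dP
\]
as a Lyapunov function; it is precisely the functional controlling membership in $\mathcal{U}_P$. The first step will be to invoke Pinsker's inequality, which in the factor-of-$2$ normalization of the paper reads $\|P-Q\|^2 \leq 2\,H_P(Q)$: choosing $\delta \leq \epsilon^2/2$, where $\epsilon>0$ witnesses strong uninvadability of $P$, every $Q \in \mathcal{U}_P$ then satisfies $\|P-Q\|\leq \epsilon$, so the defining inequality $E(P,R)>E(R,R)$ of strong uninvadability is available throughout the argument.

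Next I would use the explicit first integral $dQ(t)/dQ(0) = \exp\!\bigl(\int_0^t \sigma(\cdot,Q(s))\,ds\bigr)$ of~(\ref{re}), which preserves $P\ll Q(t)$ for all $t\geq 0$, to derive the Lyapunov identity
\[
H_P(Q(t)) - H_P(Q(0)) \;=\; -\int_0^t \varphi(s)\,ds, \qquad \varphi(s) := E(P,Q(s))-E(Q(s),Q(s)).
\]
A standard continuity/bootstrap argument, combining the Lyapunov identity with Step~1, keeps the trajectory inside $\{\|\cdot-P\|\leq\epsilon\}$ for all $t\geq 0$, so $\varphi\geq 0$ and $H_P(Q(t))$ decreases to some $L\geq 0$, giving $\int_0^\infty \varphi(s)\,ds = H_P(Q(0))-L < \infty$. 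Since $u$ is bounded, so are $\sigma$ and the time-derivatives of the two bilinear forms appearing in $\varphi$, hence $\varphi'$ is uniformly bounded and $\varphi$ is uniformly continuous on $[0,\infty)$. Barbalat's lemma then promotes integrability to the pointwise statement $\varphi(t)\to 0$.

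The final step is to conclude $\tau$-convergence. For any sequence $s_n\to\infty$, relative $\tau$-compactness of $\Delta$ produces a subnet $Q(s_{n_k})\to Q^*$ in $\tau$. The $\tau$-continuity hypothesis on $Q\mapsto E(P,Q)-E(Q,Q)$ together with $\varphi(s_{n_k})\to 0$ gives $\varphi(Q^*)=0$; $\tau$-lower semicontinuity of $\|\cdot\|$ gives $\|Q^*-P\|\leq \epsilon$; strong uninvadability then forces $Q^*=P$. Since every $\tau$-limit point of the trajectory equals $P$, $\tau$-compactness delivers $Q(t)\to P$ in $\tau$.

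The main obstacle will be the tension between the strong topology in which the replicator flow naturally lives and the weaker $\tau$ in which convergence must be concluded. Barbalat's lemma is the key bridge: the Lyapunov identity yields only integrability of the dissipation, but uniform continuity of $\varphi$ upgrades this to pointwise vanishing, after which $\tau$-continuity of $\varphi$ and $\tau$-lower semicontinuity of $\|\cdot\|$ allow strong uninvadability to pin down every $\tau$-limit point. The bootstrap in Step~2 (using the Pinsker bound reflexively to ensure the Lyapunov inequality keeps applying) is the other technical point that requires care.
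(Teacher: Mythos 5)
This theorem is quoted in the paper from \cite{Bom} as background material; the paper itself gives no proof of it, so there is no in-paper argument to compare yours against line by line. That said, your proposal is the standard cross-entropy argument and it is essentially sound: the Lyapunov function $H_P(Q)=\int_S \ln(dP/dQ)\,dP$, the dissipation identity $\frac{d}{dt}H_P(Q(t)) = -\bigl(E(P,Q(t))-E(Q(t),Q(t))\bigr)$ obtained from the explicit exponential form of the flow, and the Pinsker bound are exactly the ingredients the authors themselves deploy later for their own Theorems \ref{Lyapunov Stability of P^*} and \ref{Asymptotic Stability of P^*} (where $V$ in \eqref{lyapunov fn} is this same relative entropy and $\dot V(Q) = -E(P^*,Q)+E(Q,Q)$). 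Your bootstrap via $\delta\le\epsilon^2/2$ correctly keeps the trajectory inside the uninvadability barrier, and the boundedness of $u$ does make $\varphi$ Lipschitz, so Barbalat's lemma legitimately upgrades $\int_0^\infty\varphi<\infty$ to $\varphi(t)\to 0$; the endgame via $\tau$-compactness, $\tau$-continuity of $\varphi$, and lower semicontinuity of the norm correctly pins every cluster point to $P$. Where your write-up differs from the route the paper takes for its own results is precisely at this last stage: the paper cannot assume any compactness of $\Delta$ in the strong topology, so it replaces your ``every limit point is a zero of the dissipation'' step by the bespoke condition (iv) of Theorem \ref{Asymptotic Stability} in the Appendix, verified by hand through the estimates \eqref{initial at Q^*}--\eqref{V diff form}; your Barbalat-plus-compactness argument is cleaner but is only available because Theorem \ref{Th2 bomze} assumes relative $\tau$-compactness. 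Two small points deserve explicit care if you write this up fully: relative compactness only places the cluster point $Q^*$ in the $\tau$-closure of $\Delta$, while strong uninvadability and the $\tau$-continuity of $Q\mapsto E(P,Q)-E(Q,Q)$ are hypotheses on $\Delta$ itself; and the lower semicontinuity hypothesis concerns $Q\mapsto\|Q\|$, so you should say a word about why $Q\mapsto\|Q-P\|$ inherits it (translation in the L-space). Both issues are artifacts of the theorem statement rather than defects of your argument.
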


The $\tau$-compactness condition in the above theorem is a very strong condition when $\tau$ is taken to be the strong topology. Bomze gives another result regarding the asymptotic stability of the population state $P$ with $\tau$ as the strong topology under the condition of the following theorem.

Let $\mathcal{M}$ be the linear span of $\Delta$, with variational norm, and $\mathcal{F}$ be the space of all bounded measurable functions with the norm $\|F\|_\infty = \sup_{z\in S}|F(z)|$. Also, for $F_Q \in \mathcal{F}$, $F_Q(z)$ denotes the mean payoff to $z \in S$ against $Q \in \Delta$.

\begin{theorem}[\cite{Bom}]\label{Th1 bomze}
Let $P \in \Delta$ be a rest point and assume that the map $Q \mapsto F_Q$ from $\Delta$ to $\mathcal{F}$ is Fr\'{e}chet differentiable at $Q=P$ in the sense that there is a continuous linear map $DF_P : \mathcal{M} \to \mathcal{F}$ such that for all $\eta > 0$ there is a $\rho > 0 $ fulfilling
\[
\|F_Q - F_P - DF_P(Q - P)\|_\infty \leq \eta \|Q-P\|
\]
\[
\mbox{whenever}~~~\|Q-P\| < \rho~~\mbox{and}~ Q \in \Delta.
\]
$P$ is strongly uninvadable if there is a constant $c > 0$ such that
\begin{equation}\label{neggame}
\int_S DF_P(Q-P)~d(Q-P) \leq -c ~\|Q-P\|^2~~\mbox{for all }Q\in \Delta.
\end{equation}
\end{theorem}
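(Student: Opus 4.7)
The target is $\epsilon>0$ such that $\int F_Q\,d(Q-P)<0$ for every $Q\in\Delta\setminus\{P\}$ with $\|Q-P\|<\epsilon$, since this is equivalent to $E(P,Q)>E(Q,Q)$. My plan is a second-order Taylor estimate in the spirit of classical Lyapunov arguments: the quadratic form appearing in \eqref{neggame} should dominate, and one must verify that the first-order piece does not destroy the bound.

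Given the constant $c$ from \eqref{neggame}, fix $\eta=c/2$ and let $\rho>0$ be the associated Fr\'{e}chet radius, so that for $\|Q-P\|<\rho$ one has $F_Q = F_P + DF_P(Q-P) + r(Q)$ with $\|r(Q)\|_\infty\le (c/2)\|Q-P\|$. Integrating against the signed measure $Q-P$ yields
\[
\int F_Q\,d(Q-P) = \int F_P\,d(Q-P) + \int DF_P(Q-P)\,d(Q-P) + \int r(Q)\,d(Q-P).
\]
By \eqref{neggame} the middle summand is bounded above by $-c\|Q-P\|^2$, and the last summand is bounded in absolute value by $\|r(Q)\|_\infty\|Q-P\|\le (c/2)\|Q-P\|^2$; these two therefore combine to at most $-(c/2)\|Q-P\|^2$.

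It remains to handle the leading term $\int F_P\,d(Q-P)=E(Q,P)-E(P,P)$. Because $P$ is a rest point of the replicator dynamics, $\sigma(\cdot,P)\equiv 0$ on $\supp{P}$ and hence $F_P\equiv E(P,P)$ $P$-almost everywhere; writing the Lebesgue decomposition $Q=gP+Q^s$ with $Q^s\perp P$ collapses this leading term to $\int(F_P-E(P,P))\,dQ^s$, which is supported outside $\supp{P}$. Appealing to the pointwise Nash inequality $F_P(z)\le E(P,P)$ on $S$---a condition necessary for strong uninvadability itself, and which must be available alongside the rest-point status of $P$---the residual integral is non-positive. Combining with the quadratic estimate gives $\int F_Q\,d(Q-P)\le -(c/2)\|Q-P\|^2<0$ for $0<\|Q-P\|<\rho$, so the conclusion holds with $\epsilon=\rho$.

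The main obstacle is precisely this leading term: since $Q-P$ has total variation only $\|Q-P\|$, the naive estimate for $\int F_P\,d(Q-P)$ is $O(\|Q-P\|)$ and would swamp the quadratic control supplied by \eqref{neggame}. The rest-point hypothesis alone kills the absolutely continuous part against $P$, but extracting the pointwise sign $F_P(z)\le E(P,P)$ for every $z\in S$---and not only for $z\in\supp{P}$---is the delicate part of the argument, and is what ultimately allows the negative-definite quadratic structure in \eqref{neggame} to decide the inequality.
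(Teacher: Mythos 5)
First, a point of reference: the paper offers no proof of this statement --- it is quoted from \cite{Bom} as background material --- so there is no in-paper argument to compare yours against. Judged on its own terms, your overall strategy is the natural one: expand $F_Q = F_P + DF_P(Q-P) + r(Q)$, choose $\eta = c/2$ so the remainder contributes at most $(c/2)\|Q-P\|^2$ (using that $\|Q-P\|$ as defined here is exactly the total variation of the signed measure $Q-P$, so $|\int r(Q)\,d(Q-P)| \le \|r(Q)\|_\infty\,\|Q-P\|$), and let \eqref{neggame} supply $-c\|Q-P\|^2$ from the quadratic term. Those two steps are carried out correctly.

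The genuine gap is exactly where you flag it, and it is not merely ``delicate'': the pointwise inequality $F_P(z) \le E(P,P)$ for all $z \in S$ does not follow from the stated hypotheses, and without it no argument can succeed, because the statement as transcribed is then false. Being a rest point only forces $F_P = E(P,P)$ $P$-almost everywhere (Lemma \ref{RPLG}); it says nothing off $\supp{P}$, and \eqref{neggame} cannot help since it constrains $DF_P$, not $F_P$. Concretely, take $S=\{0,1\}$, $u(0,0)=0$, $u(1,0)=1$, $u(0,1)=10$, $u(1,1)=0$, and $P=\delta_0$. Then $P$ is a rest point, $Q\mapsto F_Q$ is affine hence Fr\'echet differentiable with $DF_P(\mu)=F_\mu$ and zero remainder, and for $Q-P=s(\delta_1-\delta_0)$ one computes $\int DF_P(Q-P)\,d(Q-P) = -11s^2 = -\tfrac{11}{4}\|Q-P\|^2$, so \eqref{neggame} holds with $c=11/4$; yet $E(P,Q_s)-E(Q_s,Q_s)=s(11s-1)<0$ for $0<s<1/11$, so $P$ is not even an ESS. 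Your justification for the Nash inequality --- that it is ``necessary for strong uninvadability itself, and... must be available'' --- is therefore circular: a necessary condition of the conclusion cannot be assumed while proving that conclusion. The repair is that Bomze's original hypothesis is that $P$ is an equilibrium (Nash) state, not merely a rest point; with that added, your reduction of the leading term to $\int (F_P - E(P,P))\,dQ^s \le 0$ via the Lebesgue decomposition is valid and the rest of your argument closes correctly with $\epsilon=\rho$.
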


Note that games satisfying this last condition are known as negative definite games. Negative definite games possess many interesting properties and they have been studied extensively in the literature (\cite{Sandholm,Cheung2014,LahkarRiedel2015}).

The above theorem gives a first-order condition for $P$ to be strongly uninvadable. The next one gives another set of conditions for asymptotic stability of a population state $P$.

\begin{theorem}[\cite{Bom}]\label{Th3 bomze}
Under the assumptions of Theorem \ref{Th1 bomze}, every replicator dynamics trajectory $Q(t)$, $t\geq 0$, starting in $\mathcal{U}_P$ (as in Theorem \ref{Th2 bomze}) satisfies $\|Q(t) - P\| \to 0$ as $t \to \infty$.
\end{theorem}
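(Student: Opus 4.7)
The natural Lyapunov function is the Kullback--Leibler divergence
\[
H_P(Q) := \int_S \ln\!\left(\frac{dP}{dQ}\right) dP,
\]
which is finite and nonnegative on $\mathcal{U}_P$ and vanishes only at $Q=P$. Differentiating along a replicator trajectory, using the Radon--Nikodym identity $dF(Q(t))/dQ(t)=\sigma(\cdot,Q(t))$, yields the standard expression
\[
\frac{d}{dt}H_P(Q(t)) = -\bigl[E(P,Q(t))-E(Q(t),Q(t))\bigr].
\]
The plan is to bound this right-hand side above by $-\tfrac{c}{2}\|Q(t)-P\|^2$ along the whole orbit, deduce $\int_0^\infty \|Q(s)-P\|^2\,ds < \infty$, and then conclude by a Barbalat-type argument that $\|Q(t)-P\|\to 0$.

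For the quadratic lower bound on $E(P,Q)-E(Q,Q)$, I would write it as $\int F_Q\,d(P-Q)$ and insert the Fr\'echet expansion $F_Q = F_P + DF_P(Q-P) + R_Q$, with $\|R_Q\|_\infty \leq \eta\|Q-P\|$ whenever $\|Q-P\| < \rho$. Under the hypotheses of Theorem \ref{Th1 bomze}, $P$ is strongly uninvadable, hence a symmetric Nash equilibrium, so $F_P \leq E(P,P)$ pointwise with equality $P$-almost surely; consequently $\int F_P\,d(P-Q) \geq 0$. The negative-definite condition \eqref{neggame} contributes $\int DF_P(Q-P)\,d(P-Q) \geq c\|Q-P\|^2$, while the remainder is dominated in absolute value by $\eta\|Q-P\|^2$. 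Taking $\eta = c/2$ and the corresponding $\rho$, we obtain
\[
E(P,Q)-E(Q,Q) \geq \tfrac{c}{2}\|Q-P\|^2 \quad \text{whenever } \|Q-P\| < \rho.
\]

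To guarantee the trajectory stays inside this ball, I would invoke Pinsker's inequality, which, with the paper's normalization $\|Q-P\|=2\sup_B|P(B)-Q(B)|$, takes the form $\|Q-P\|^2 \leq 2H_P(Q)$. Fix $\delta > 0$ so small that $\sqrt{2\delta}<\rho$; then for any $Q(0)\in\mathcal{U}_P$ with $H_P(Q(0))<\delta$, the monotone decay of $H_P$ together with the preservation of the absolute continuity $P\ll Q(t)$ under the replicator flow (since $dQ(t)/dQ(0)=\exp\int_0^t \sigma(\cdot,Q(s))\,ds$) confines the orbit to $\{\|Q-P\|<\rho\}$ for all $t\geq 0$. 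Integrating the differential inequality then gives
\[
\int_0^\infty \|Q(s)-P\|^2\,ds \leq \frac{2}{c}\, H_P(Q(0)) < \infty.
\]
Boundedness of $u$ implies $\|F(Q(t))\|$ is uniformly bounded, so $t\mapsto Q(t)$ is Lipschitz in the variational norm and $t\mapsto \|Q(t)-P\|^2$ is uniformly continuous; Barbalat's lemma then forces $\|Q(t)-P\|\to 0$. The main obstacle I anticipate is the self-consistent bootstrap at the core of this scheme---simultaneously confining the orbit to the linearization neighborhood, preserving $P\ll Q(t)$ so that $H_P(Q(t))$ remains finite, and rigorously differentiating the divergence through an infinite-dimensional ODE; once those points are handled, the Lyapunov--Barbalat endgame is routine.
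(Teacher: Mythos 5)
The paper never proves this statement---Theorem~\ref{Th3 bomze} is quoted verbatim from \cite{Bom} as background, so there is no in-paper proof to match against. Judged on its own terms, your argument is sound. The identity $E(P,Q)-E(Q,Q)=\int F_Q\,d(P-Q)$, the three-term split via the Fr\'echet expansion, the sign of $\int F_P\,d(P-Q)$ from the Nash property (which does follow from strong uninvadability, itself guaranteed by Theorem~\ref{Th1 bomze}), the contribution $c\|Q-P\|^2$ from \eqref{neggame}, and the remainder bound $\eta\|Q-P\|^2$ (using that the paper's norm is exactly total variation) are all correct, as is the exact formula $\frac{d}{dt}H_P(Q(t))=-[E(P,Q(t))-E(Q(t),Q(t))]$ obtained from the explicit density $dQ(t)/dQ(0)=\exp\int_0^t\sigma(\cdot,Q(s))\,ds$. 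The bootstrap you flag (openness/closedness of the set of times at which the orbit stays in the $\rho$-ball, using Pinsker and the monotonicity of $H_P$ there) is routine and closes cleanly. It is worth noting that your route differs from the machinery the paper builds for its own analogous result (Theorem~\ref{Asymptotic Stability of P^*} via Theorem~\ref{Asymptotic Stability} in the Appendix): there the authors use the same Kullback--Leibler Lyapunov function but conclude by a LaSalle-type limit-point argument, extracting a sequence $t_n\to\infty$ along which $V$ converges and deriving a contradiction from strict decrease; that argument needs no quantitative rate but requires the extra flow-continuity hypothesis (iv). Your Barbalat endgame instead exploits the quantitative bound $E(P,Q)-E(Q,Q)\geq\tfrac{c}{2}\|Q-P\|^2$ available under negative definiteness to get $\int_0^\infty\|Q(s)-P\|^2\,ds<\infty$ plus uniform continuity from boundedness of $u$; this avoids any compactness or subsequence extraction in the infinite-dimensional state space, which is precisely the weakness the paper criticizes in Bomze's Theorem~\ref{Th2 bomze}. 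Both are valid; yours buys a cleaner convergence mechanism at the price of needing the full strength of \eqref{neggame}, which is exactly what Theorem~\ref{Th3 bomze} assumes.
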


The first-order condition for strong uninvadability of $P$, given in Theorem \ref{Th1 bomze} is not a necessary condition, as illustrated by Example \ref{counter example} in the next section. We observe that the conditions for stability can be weakened when we are dealing with polymorphic population states. In the next section we focus on the polymorphic population states and provide with conditions for their stability with the underlying topology as the strong topology.

\section{Stability of Polymorphic Population States}

In this section, we will first study some properties of polymorphic population states and then we will move on to the stability of these population states. As the name suggests, polymorphic population states have a finite support. Moreover we can view them as convex combinations of monomorphic population states.

\subsection{Properties of Polymorphic Population States}

We begin by characterizing rest points of the replicator dynamics \ref{red}.

\begin{lemma}\label{RPLG}
A population state $P$ is a rest point of the replicator dynamics \ref{red} if and only if $\int_S u(z, w) P(dw)$ is constant a.s. $z(P)$.
\end{lemma}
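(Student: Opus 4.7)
The plan is to unwind the definitions. A rest point means $F(P)=0$ as a signed measure, and by construction $F(P)$ is the indefinite integral of $\sigma(\cdot,P)$ against $P$, so vanishing of $F(P)$ is equivalent to $\sigma(z,P)=0$ for $P$-a.e.\ $z$. Since $\sigma(z,P)=\int_S u(z,w)\,P(dw)-E(P,P)$, this gives exactly the ``constant a.s.\ $z(P)$'' condition, with the constant identified as $E(P,P)$.

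For the forward direction, I would assume $F(P)=0$. Then for every $B\in\mathcal{B}$,
\[
0 \;=\; F(P)(B) \;=\; \int_B \sigma(z,P)\,P(dz).
\]
Applying this to $B=\{z:\sigma(z,P)>0\}$ and to $B=\{z:\sigma(z,P)<0\}$ (both Borel, since $z\mapsto\int_S u(z,w)P(dw)$ is measurable by Fubini and $u$ bounded measurable) forces $\sigma(\cdot,P)=0$ $P$-a.s., hence $\int_S u(z,w)\,P(dw)=E(P,P)$ for $P$-a.e.\ $z$.

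For the reverse direction, suppose $\int_S u(z,w)\,P(dw)=c$ for $P$-a.e.\ $z$. Integrating with respect to $P(dz)$ and using Fubini immediately identifies $c=E(P,P)$, so $\sigma(z,P)=0$ $P$-a.s. Therefore
\[
F(P)(B) \;=\; \int_B \sigma(z,P)\,P(dz) \;=\; 0
\]
for every $B\in\mathcal{B}$, i.e.\ $P$ is a rest point.

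There is no real obstacle here: the only mild point to flag is measurability of $z\mapsto\int_S u(z,w)\,P(dw)$ and the use of Fubini, both of which are immediate from $u$ being bounded and measurable. The argument is essentially the infinite-dimensional analogue of the familiar finite-population characterization that rest points of the replicator equation are exactly the states whose support lies in a level set of the ``payoff against the population'' map.
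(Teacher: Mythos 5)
Your proposal is correct and follows essentially the same route as the paper's proof: both reduce the rest-point condition $F(P)=0$ to $\sigma(\cdot,P)=0$ a.s.\ $(P)$ and then identify the constant as $E(P,P)$. Your explicit use of the sets $\{\sigma(\cdot,P)>0\}$ and $\{\sigma(\cdot,P)<0\}$ merely fills in a step the paper treats as immediate.
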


\begin{proof}
Clearly, $P$ is a rest point of the replicator dynamics if and only if for all $B \in \mathcal{B}$,
\[
F(P)(B) =\int_B \sigma(z,P)~ P(dz) =  0.
\]
This is equivalent to
\[
\sigma(\cdot, P) = 0 ~~\mbox{a.s.}(P).
\]
This implies and is implied
by
\[
 E(\delta_{z},P) = E(P,P) ~~~~~\mbox{a.s. } z(P).
\]
From this it follows that $P$ is a rest point of the replicator dynamics if and only if,  $ \int_S u(z, w) P(dw)$  is independent of $z$ a.s.$(P)$.
\qed
\end{proof}

In the case of the polymorphic population state given by
\begin{equation}\label{P}
P^* = \alpha_1 \delta_{x_1} + \alpha_2 \delta_{x_2} + \cdots + \alpha_k \delta_{x_k},
\end{equation}
where $x_1, x_2, \cdots, x_k$ are distinct points in $S$ and the sum of the positive numbers $\alpha_1, \alpha_2, \cdots, \alpha_k$ is $1$, this lemma reduces to the following corollary.

\begin{corollary}\label{rest point lemma}
Let $P^*$ be a polymorphic population state given by \eqref{P}. Then, $P^*$ is a rest point of the replicator dynamics if and only if the sum $  \sum \limits_{j=1}^k \alpha_j u(x_i,x_j)$ is independent of $i$.
\end{corollary}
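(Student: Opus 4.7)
The plan is to apply Lemma \ref{RPLG} directly to $P^*$ and unpack what the phrase ``constant a.s.\ $z(P^*)$'' reduces to when $P^*$ has finite support with strictly positive weights.

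First I would observe that since $P^* = \sum_{j=1}^k \alpha_j \delta_{x_j}$ with each $\alpha_j > 0$ and the $x_j$ distinct, the support of $P^*$ is exactly the finite set $\{x_1,\ldots,x_k\}$, and for any measurable function $g:S\to\mathbb{R}$, the statement ``$g(z)$ is constant a.s.\ $z(P^*)$'' is equivalent to ``$g(x_1)=g(x_2)=\cdots=g(x_k)$'', because each atom carries positive mass. This is the only point where a small care is needed; once this observation is in place, the corollary becomes a purely mechanical specialization of the preceding lemma.

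Next I would compute the integrand appearing in Lemma \ref{RPLG}: for an arbitrary $z\in S$,
\[
\int_S u(z,w)\,P^*(dw) \;=\; \sum_{j=1}^k \alpha_j\, u(z,x_j),
\]
by the definition of $P^*$ and linearity of the integral with respect to the measure. Combining this identity with the reduction of the a.s.-condition noted above, Lemma \ref{RPLG} says that $P^*$ is a rest point if and only if the function $z\mapsto\sum_{j=1}^k\alpha_j u(z,x_j)$ takes the same value at $z=x_1,\ldots,z=x_k$, i.e.\ $\sum_{j=1}^k \alpha_j u(x_i,x_j)$ is independent of $i$, which is the claimed equivalence.

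The main (and essentially only) subtlety to flag is the interpretation of the ``a.s.'' qualifier: for a purely atomic measure all of whose atoms have positive weight, ``a.s.'' collapses to ``pointwise on the support.'' There is no genuine obstacle; the corollary is a direct discretization of Lemma \ref{RPLG}.
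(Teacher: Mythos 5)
Your proposal is correct and matches the paper's argument, which likewise deduces the corollary from Lemma \ref{RPLG} by noting that the support of $P^*$ is exactly $\{x_1,\ldots,x_k\}$; you have simply spelled out the computation of $\int_S u(z,w)\,P^*(dw)$ and the reduction of the a.s.\ condition to pointwise equality on the atoms, which the paper leaves implicit.
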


\begin{proof}
The proof follows since the support of  $P^*$ is  $\{x_1, x_2, \cdots, x_k \}$.
\qed
\end{proof}

We illustrate the above corollary using the following example.

\begin{example}
Let $S = [0,1]$ and the payoff function be defined by
\[
u(z,w) = \left \{ \begin{array}{cl}
                    w & \text{if } z < w\\
                    z-w & \text{if } z \geq w
                  \end{array}
\right.
\]
Consider the polymorphic population state
\[
P^*  = \alpha_1 \delta_{x_1} + \alpha_2 \delta_{x_2} + \alpha_3 \delta_{x_3} = \frac{1}{3} \delta_0 + \frac{1}{3} \delta_{1/2} + \frac{1}{3} \delta_1.
\]
Then for $i=1$,
\begin{align*}
\sum_{j=1}^k \alpha_j u(x_1,x_j) & = \frac{1}{3} \sum_{j=1}^3 u(0,x_j)  = \frac{1}{3} \left \{ u(0,0) + u(0,1/2) + u(0,1) \right \} \\[2mm]
& = \frac{1}{3} \left \{ 0 + \frac{1}{2} + 1 \right \}  = \frac{1}{2}.
\end{align*}
Similarly, for $i=2$ and $i=3$, we get the sum $\sum_{j=1}^k \alpha_j u(x_i,x_j)$ as $1/2$. Thus, by Corollary \ref{rest point lemma}, $P^*$ is a rest point of the replicator dynamics.
\end{example}

Now that we have established the condition for the polymorphic population state $P^*$ to be a rest point of the replicator dynamics, we move on to characterizing small neighbourhoods of $P^*$ with respect to the variational topology.

Consider population states $P$ and $Q$ from $\Delta$. Then by Lebesgue decomposition, we can decompose $Q$ in terms of the Borel measures $Q_1$ and $Q_2$ such that
\[
Q =  Q_1 +  Q_2
\]
where $Q_1$ is absolutely continuous with respect to $P$ and $Q_2$ is singular with respect to $P$.  Now
\[
\|Q - P \| = 2 \sup_{B \in \mathcal{B}} | Q_1(B) + Q_2(B) - P(B) | \geq 2 |Q_1 (A) - P(A)|
\]
for every Borel set  $A \subseteq \supp{P}$.

We can similarly decompose the population state $Q$, by taking $P = P^*$, the polymorphic population state. Now since the support of $P^* = \{x_1, x_2, \cdots, x_k\}$, note that whenever the support of $Q$ is a strict subset of the support of $P^*$, form the above inequality, we obtain
\[
\|Q - P^* \|  \geq  2~ \inf ~ \{ \alpha_j : x_j \not \in \supp{Q} \}.
\]
Let $ 0 < \epsilon <  2~\inf ~\{ \alpha_j : j = 1, 2, \cdots, k \}$.
From the above, it follows that every $Q$ in the $\epsilon$-neighbourhood of $P^*$, the support of $Q_1$ must be equal to
the support of $P^*$.\\

In conclusion, every  population state $Q$ sufficiently close to $P^*$ will be of the form
\begin{equation}\label{nbhd population}
Q = \sum_{j=1}^k \beta_j \delta_{x_j} + \beta_{k+1} R~; ~~~~\sum_{j=1}^{k+1} \beta_j = 1
\end{equation}
where the support of $R \in \Delta$ is disjoint from the support of $P^*$.

One useful consequence of this fact is  the following lemma, whose proof is a straight forward application
of the representation \eqref{nbhd population}.

\begin{lemma}\label{abscont}
Let $P^*$ be the polymorphic state given by \eqref{P}. Then for sufficiently small $\epsilon$, $P^*$ is absolutely continuous with respect to $Q$, for every $Q$ in $\epsilon$-neighbourhood of $P^*$.
\end{lemma}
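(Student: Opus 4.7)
The plan is to use the explicit decomposition \eqref{nbhd population} that was just established. Choose $\epsilon < 2\min_j \alpha_j$, which is the same threshold used to derive \eqref{nbhd population}. Then any $Q$ in the $\epsilon$-neighbourhood of $P^*$ can be written as
\[
Q = \sum_{j=1}^{k} \beta_j \delta_{x_j} + \beta_{k+1} R,
\]
with $\sum_{j=1}^{k+1} \beta_j = 1$ and $\supp{R}$ disjoint from $\supp{P^*}=\{x_1,\dots,x_k\}$.

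The first step is to argue that each atomic weight $\beta_j$ (for $j=1,\dots,k$) is strictly positive. Since the singletons $\{x_j\}$ lie in $\mathcal{B}$, the definition of variational norm gives
\[
|\beta_j - \alpha_j| = |Q(\{x_j\}) - P^*(\{x_j\})| \le \tfrac12\|Q-P^*\| < \tfrac{\epsilon}{2} < \alpha_j,
\]
so $\beta_j > 0$ for every $j = 1,\dots,k$.

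The second step is to verify the absolute continuity directly from the definition. Suppose $B \in \mathcal{B}$ satisfies $Q(B)=0$. Then $\sum_{j : x_j \in B} \beta_j + \beta_{k+1} R(B) = 0$, and since all terms are non-negative we must have $\beta_j\, \mathds{1}_{\{x_j \in B\}} = 0$ for each $j$. As $\beta_j > 0$, this forces $x_j \notin B$ for every $j$, so $B \cap \supp{P^*} = \emptyset$ and hence $P^*(B) = 0$. This is exactly $P^* \ll Q$.

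There is no real obstacle here; the only point that requires some care is selecting $\epsilon$ small enough so that both \eqref{nbhd population} is valid and the strict positivity $\beta_j > 0$ holds, and the bound $\epsilon < 2\min_j \alpha_j$ achieves both simultaneously.
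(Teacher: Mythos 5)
Your proof is correct and follows exactly the route the paper intends: the paper omits the argument, stating only that it is ``a straight forward application of the representation \eqref{nbhd population}'', and you supply precisely those details (positivity of each $\beta_j$ via the singleton bound on the variational norm, then the direct verification of $P^*\ll Q$). No issues.
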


Not surprisingly, this lemma fails when $P^*$ is infinitely supported. In fact, the Lebesgue measure on $[0, 1]$ provides a counter example.\\

Another consequence of the equation \eqref{nbhd population} is the following lemma which gives bounds for the variational distance between population states in a small neighbourhood of $P^*$.

\begin{lemma}\label{bounds}
Let $\epsilon > 0$ be small enough such that all population states in the neighbourhood $\Omega(\epsilon) := \{Q \in \Delta : \|Q-P^*\| < \epsilon\}$ are of the form \eqref{nbhd population}. If
\begin{align*}
Q_1 = \sum_{j=1}^k \beta_j \delta_{x_j} + \beta_{k+1} R_1~; ~\sum_{j=1}^{k+1} \beta_j = 1,\\[2mm]
Q_2 = \sum_{j=1}^k \gamma_j \delta_{x_j} + \gamma_{k+1} R_2~; ~\sum_{j=1}^{k+1} \gamma_j = 1
\end{align*}
are population states in $\Omega(\epsilon)$ then we have,
\[
2 \max_{1 \leq j \leq k} \left \{ |\beta_j - \gamma_j| \right \} \leq \|Q_1-Q_2\| \leq 2 \max \left \{ \sum_{j=1}^k |\beta_j - \gamma_j|~, ~2 \left ( 1 - \sum_{j=1}^k \beta_j \right ) \right \}.
\]
\end{lemma}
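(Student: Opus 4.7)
The argument is signed-measure algebra built on the canonical decomposition
\[
Q_1-Q_2 \;=\; \sum_{j=1}^{k}(\beta_j-\gamma_j)\,\delta_{x_j} \;+\; \bigl(\beta_{k+1}R_1 - \gamma_{k+1}R_2\bigr),
\]
whose two summands are supported on disjoint sets, namely $\{x_1,\dots,x_k\}$ and its complement (this is exactly where the structure \eqref{nbhd population} enters, since by assumption $R_1,R_2$ have supports disjoint from $\{x_1,\dots,x_k\}$). Mutual singularity of these two pieces, together with the identity $\|\mu\|=|\mu|(S)$ for zero-total-mass signed measures (a direct consequence of the paper's definition $\|Q\|=2\sup_B|Q(B)|$), gives the master formula $\|Q_1-Q_2\|=\sum_{j=1}^{k}|\beta_j-\gamma_j|+|\beta_{k+1}R_1-\gamma_{k+1}R_2|(S)$, from which both sides of the lemma will drop out.

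For the lower bound I test the defining supremum on the singleton $B=\{x_j\}$: since $R_1(\{x_j\})=R_2(\{x_j\})=0$, one obtains $(Q_1-Q_2)(\{x_j\})=\beta_j-\gamma_j$, hence $\|Q_1-Q_2\|\ge 2|\beta_j-\gamma_j|$, and maximising over $j$ gives the left inequality. For the upper bound I first bound the residual piece by the triangle inequality, $|\beta_{k+1}R_1-\gamma_{k+1}R_2|(S)\le\beta_{k+1}+\gamma_{k+1}$, yielding $\|Q_1-Q_2\|\le A+\beta_{k+1}+\gamma_{k+1}$ with $A:=\sum_{j=1}^{k}|\beta_j-\gamma_j|$. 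The mass constraints $\sum_{j=1}^{k+1}\beta_j=\sum_{j=1}^{k+1}\gamma_j=1$ then force $\gamma_{k+1}-\beta_{k+1}=\sum_{j=1}^{k}(\beta_j-\gamma_j)$; after labelling $Q_1,Q_2$ so that $\beta_{k+1}\ge\gamma_{k+1}$ (harmless by the symmetry of the two names in $\|Q_1-Q_2\|$), one gets $\beta_{k+1}+\gamma_{k+1}\le 2\beta_{k+1}$ and hence $\|Q_1-Q_2\|\le A+2\beta_{k+1}$. Finally, the elementary inequality $a+b\le 2\max\{a,b\}$ with $a=A$ and $b=2\beta_{k+1}=2\bigl(1-\sum_{j=1}^{k}\beta_j\bigr)$ produces the stated upper bound.

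There is no deep analytic input; the proof is essentially book-keeping. The two conceptual ingredients one must organise are (i) mutual singularity of the atomic and residual parts of $Q_1-Q_2$, which is what makes the total variations add, and (ii) the affine constraints on the coefficients, which let the unwanted $\gamma_{k+1}$ contribution be absorbed into either the $A$-term or the $\beta_{k+1}$-term. Neither step is really an obstacle; the only mild trap to avoid is forgetting that the residual part $\beta_{k+1}R_1-\gamma_{k+1}R_2$ need not be a signed measure of small total variation in the extreme case $R_1\perp R_2$, which is precisely why the $\beta_{k+1}$-term has to appear on the right-hand side at all.
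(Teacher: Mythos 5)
The paper itself omits the proof of this lemma (``it follows from straight forward calculations''), so there is no authorial argument to compare against; judged on its own, your framework --- mutual singularity of the atomic and residual parts of $Q_1-Q_2$, additivity of total variation, and the identity $\|\mu\|=|\mu|(S)$ for signed measures with $\mu(S)=0$ --- is the right one, and your lower bound is correct. The gap is in the upper bound, at the step ``after labelling $Q_1,Q_2$ so that $\beta_{k+1}\ge\gamma_{k+1}$ (harmless by the symmetry of the two names)''. The left-hand side $\|Q_1-Q_2\|$ is indeed symmetric, but the right-hand side of the stated inequality is not: it contains $2\bigl(1-\sum_{j=1}^k\beta_j\bigr)=2\beta_{k+1}$ and makes no reference to $\gamma_{k+1}$. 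Swapping the names therefore changes the statement being proved, so your argument only covers the case $\beta_{k+1}\ge\gamma_{k+1}$.

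In the remaining case the estimate cannot be repaired, because the stated upper bound actually fails there. Take $k=2$, $P^*=\tfrac12\delta_{x_1}+\tfrac12\delta_{x_2}$, and for small $\eta>0$ let
\[
Q_1=\tfrac12\delta_{x_1}+\bigl(\tfrac12-\eta\bigr)\delta_{x_2}+\eta R_1,\qquad Q_2=\tfrac12\delta_{x_1}+\bigl(\tfrac12-5\eta\bigr)\delta_{x_2}+5\eta R_2,
\]
with $R_1\perp R_2$ both vanishing on $\{x_1,x_2\}$. Both states lie in $\Omega(\epsilon)$ once $\eta$ is small, and your master formula gives $\|Q_1-Q_2\|=4\eta+\eta+5\eta=10\eta$, whereas the claimed bound is $2\max\{4\eta,\,2\eta\}=8\eta$. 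What your argument does prove correctly is the symmetrized bound $\|Q_1-Q_2\|\le 2\max\bigl\{\sum_{j=1}^k|\beta_j-\gamma_j|,\;2\max(\beta_{k+1},\gamma_{k+1})\bigr\}$, and that is all the paper ever needs: its only application, inequality \eqref{bounds wrt P^*}, takes $Q_2=P^*$, for which $\gamma_{k+1}=0\le\beta_{k+1}$ and your case applies, and Remark \ref{convergence of weights} survives unchanged. So you should flag the asymmetry explicitly and either restrict to $\gamma_{k+1}\le\beta_{k+1}$ or state the corrected right-hand side; as written, the ``harmless relabelling'' is the one false move in an otherwise correct proof, and it conceals a defect in the lemma's statement itself.
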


The proof of this lemma is omitted as it follows from straight forward calculations of the variational distance. From this lemma, we can write the bounds for variational distance of $P^*$ and a population state $Q$ in its neighbourhood, with the form given in \eqref{nbhd population}, as
\begin{equation}\label{bounds wrt P^*}
\max_{1 \leq j \leq k} |\alpha_j - \beta_j | \leq \frac{1}{2}\|Q - P^*\| \leq \ \max \left\{ \sum_{j=1}^k |\alpha_j - \beta_j|, 2 \left(1 - \sum_{j=1}^k \beta_j \right ) \right\}.
\end{equation}

\begin{remark}\label{convergence of weights}
The above lemma and its application not only gives us lower and upper bounds for the variational distance but it also proves that $\|Q-P^*\| \to 0$ if and only if $|\beta_j - \alpha_j| \to 0$ for every $j = 1, 2, \cdots, k$. Thus to prove convergence of a population state to $P^*$, it is enough to prove the convergence of the weights on each of $x_j$'s.
\end{remark}

Before proceeding further to study the stability of polymorphic population states, we present with an example which shows that the first order condition in Theorem \ref{Th1 bomze} is not necessary to guarantee that $P^*$ is strongly uninvadable.

\begin{example}\label{counter example}
Let $S = [-1,1]$ and the payoff function be defined as
\[
u(z,w) = 2-zw~~\mbox{for all }z,w \in S.
\]
The polymorphic  state $P^* = \alpha \delta_{-1} + (1 - \alpha) \delta_1$ with $\alpha = 1/2$ is a rest point of the replicator dynamics. Now consider a population state $Q$ from an arbitrarily small strong neighbourhood of $P^*$ (as given in Lemma \ref{abscont}). Then, $Q$ will be of the form
\[
Q = \beta \delta_{-1} + \gamma \delta_1 + (1-\beta-\gamma)R,
\]
where $R\in \Delta$ such that $R(\{-1, 1\}) = 0$ and $0< \beta+\gamma \leq 1$.

Note that $E(\delta_z,P^*) = 2 $ for all $z \in S$ which implies that
\[
E(P^*,P^*) = E(P^*,Q) = E(Q,P^*) = 2.
\]
By definition of $u$,
\[
E(Q,Q) = 2 - \left (\gamma - \beta + (1-\beta-\gamma) \mu \right )^2
\]
where $\mu = \int_S z R(dz)$.

Therefore, $P^*$ is strongly uninvadable since $E(P^*,Q) - E(Q,Q) > 0$ for every $Q\not = P^* $ in a
strong neighbourhood of $P^*$.

However, we can show that the condition \eqref{neggame} is not true. In fact, the map $F_Q = \int_S u(\cdot,w)~Q(dw)$
is Fr\`{e}chet differentiable and $DF_{P^*}(Q-P^*) = F_{Q-P^*}$. Hence,
\[
\int_S DF_{P^*}(Q-P^*)~d(Q-P^*) = E(Q,Q) - E(P^*,Q) - E(Q,P^*) + E(P^*,P^*).
\]
Taking $Q = \frac{1}{2} (\delta_{-1/2} + \delta_{1/2})$ we note that
\[
\int_S DF_{P^*}(Q-P^*)~d(Q-P^*) = 0.
\]
Thus the game is not negative definite game.
\end{example}

\subsection{Stability of $P^*$}

We are now ready to discuss the stability of polymorphic population states. First, we recall the following result
from \cite[Proposition 13]{VeelenSpreij}. Since this result forms the background for our work, we provide a proof which is slightly different from that of van Veelen and Spreij.

\begin{theorem}[\cite{VeelenSpreij}]
Every asymptotically stable rest point of the replicator dynamics in variational distance is finitely supported.
\end{theorem}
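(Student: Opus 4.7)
My plan is to argue by contraposition: assuming $P$ has infinite support, I will exhibit initial conditions $Q(0)$ arbitrarily close to $P$ in variational norm whose replicator trajectories remain uniformly bounded away from $P$, thereby refuting the strong attraction property. The argument combines two ingredients---preservation of null sets by the replicator dynamics, and the availability of Borel sets of arbitrarily small positive $P$-mass when $\supp{P}$ is infinite.

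For the first ingredient, I would fix a Borel set $A \in \mathcal{B}$ and differentiate the replicator equation to obtain
\[
\frac{d}{dt} Q(t)(A) = \int_A \sigma(z, Q(t))\, Q(t)(dz).
\]
Since $u$ is bounded, $|\sigma(\cdot, Q(t))| \leq 2\|u\|_\infty$, so $\bigl| \tfrac{d}{dt} Q(t)(A) \bigr| \leq 2\|u\|_\infty\, Q(t)(A)$. Gronwall's inequality then yields
\[
Q(0)(A)\, e^{-2\|u\|_\infty t} \leq Q(t)(A) \leq Q(0)(A)\, e^{2\|u\|_\infty t},
\]
and in particular $Q(0)(A) = 0$ forces $Q(t)(A) = 0$ for every $t \geq 0$.

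For the second ingredient, write $P = P_a + P_c$ for the atomic/non-atomic Lebesgue decomposition. Since $\supp{P}$ is infinite, either $P_a$ has infinitely many atoms---whose weights are summable and hence tend to zero, so singletons at small atoms give Borel sets $B_n$ with $0 < P(B_n) \to 0$---or $\supp{P_c}$ is infinite, in which case picking $x \in \supp{P_c}$ gives open balls $B(x,1/n)$ with $P_c(B(x,1/n)) > 0$ by definition of the support and $P_c(B(x,1/n)) \downarrow P_c(\{x\}) = 0$ by continuity from above, and trimming the finitely many atoms (if any) from each ball produces the required $B_n$. Now define $Q_n(A) := P(A \cap B_n^c)/(1 - P(B_n))$. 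A direct computation of the variational distance gives $\|Q_n - P\| \leq 2 P(B_n)/(1 - P(B_n)) \to 0$, so $Q_n$ enters any prescribed variational neighbourhood of $P$ for $n$ large.

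To conclude, observe that $Q_n(B_n) = 0$ by construction, so the null-set preservation property forces $Q_n(t)(B_n) = 0$ for all $t \geq 0$, whence
\[
\|Q_n(t) - P\| \geq 2\,|Q_n(t)(B_n) - P(B_n)| = 2 P(B_n) > 0
\]
for every $t$. The trajectory from $Q_n(0)$ thus stays at positive distance $2P(B_n)$ from $P$ for all time, contradicting the strongly attracting property of $P$. The only genuinely delicate step is the null-set preservation of the first paragraph, which leans on well-posedness of the replicator dynamics in $\mathcal{L}$ from \cite[Theorem 2]{OR1}; the remaining ingredients are essentially measure-theoretic bookkeeping about variational distance and the topological support of a probability measure.
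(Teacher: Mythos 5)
Your proof is correct, and it follows the same contrapositive skeleton as the paper's --- the dynamics cannot create mass where the initial condition has none, and an infinitely supported $P$ admits nearby states that are missing a small piece of $P$'s mass --- but you implement both halves differently. Where the paper invokes support preservation ($\supp{Q(t)}=\supp{Q(0)}$, citing Bomze) together with the Portmanteau theorem to reduce matters to the topological inclusion $\supp{P}\subseteq\supp{Q(0)}$, you prove null-set preservation directly from the replicator equation via Gronwall and then get the uniform lower bound $\|Q_n(t)-P\|\geq 2P(B_n)$ without ever mentioning topological supports. This is a genuine improvement in robustness: the paper's route requires that removing the mass on $C$ actually shrink the support of the conditioned measure, which holds only if $C$ is chosen to be open (or to contain a relatively open subset of $\supp{P}$) --- a point the paper leaves implicit when it simply takes $C$ from \cite[Lemma 15]{VeelenSpreij} --- whereas your variational-norm estimate needs nothing from $B_n$ beyond $0<P(B_n)\to 0$. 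Likewise, your atomic/non-atomic case split replaces the citation of that lemma with a self-contained construction (with the harmless caveat that the correct dichotomy is ``infinitely many atoms or $P_c\neq 0$,'' the latter automatically forcing $\supp{P_c}$ to be infinite). The paper's version is shorter because it outsources these two steps to references; yours is longer but entirely self-contained and quantitatively sharper (in fact $\|Q_n-P\|=2P(B_n)$ exactly, so your bound $2P(B_n)/(1-P(B_n))$ is slightly generous but more than adequate).
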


\begin{proof}
From \cite[Lemma 2]{Bom1}, we have
\[
\supp{Q(t)} = \supp{Q}
\]
where $Q(\cdot)$ is the trajectory of the replicator dynamics \eqref{re} with initial condition $Q(0) = Q$.
If  $Q(t)$ converges to $P$ strongly, then, by  Portmanteau theorem \cite[Theorem 2.1]{BillingsleyConv}, we must have
\begin{equation}\label{support}
\supp{P} \subseteq \supp{Q}.
\end{equation}
To prove the theorem, we exhibit a probability measure $Q$ in any arbitrary neighbourhood of $P$ contradicting \eqref{support},
provided $P$ is not finitely supported.

If $P$ is not finitely supported, then for each $\epsilon > 0$, we can
find a set $C$ such that $0 < P(C) < \epsilon$ \cite[Lemma 15]{VeelenSpreij}. Choose $Q$ which is defined by
\[
Q(B) = \frac{1}{ 1 - P(C) } Q(B \setminus C), ~~\mbox{for}~ B \in \mathcal{B}.
\]
Now, it is easy to verify that $\| P - Q \| < \epsilon$, giving the required contradiction.
\qed
\end{proof}

\begin{remark}
In fact, the above proof  also proves the result in the case of weak convergence. See  Proposition 14 in \cite{VeelenSpreij}.
\end{remark}

Let $P^*$ be a rest point of the replicator dynamics where $P^*$ is as in \eqref{P}.
Let $Q(0)$  be a population state in a small neighborhood of  $P^*$ as in Lemma \ref{abscont}.  Hence
\begin{equation}\label{initial state}
Q(0) = \sum_{j=1}^k \beta_j \delta_{x_j} + \beta_{k+1} R(0)~; ~~~~\sum_{j=1}^{k+1} \beta_j = 1
\end{equation}
where $R(0) \in \Delta$ with $R(0)(\{x_1, x_2, \cdots, x_k\}) = 0$.

Consider the solution $Q(\cdot)$  of the replicator dynamics equation \eqref{re} starting from $Q(0)$.
Since the support of $Q(0)$ and  $Q(t)$ is the same, $Q(t)(\{x_j\}) > 0$ for all $j = 1,2, \cdots, k$.

Using this, from the replicator dynamics equation \eqref{re}, we  obtain,
\begin{equation}\label{replfinite}
Q^\prime(t)(\{x_j\}) = Q(t)(\{x_j\}) ~\sigma(x_j, Q(t)), ~~~~ Q(0)(\{x_j\}) = \beta_j
\end{equation}
for $j=1, 2, \cdots, k$.

We are, now, ready to prove the stability of polymorphic population state with the following theorem which establishes its Lyapunov stability.

\begin{theorem}\label{Lyapunov Stability of P^*}
Let $P^*$ be the polymorphic population state as in \eqref{P}. If $P^*$ is strongly unbeatable then $P^*$ is Lyapunov stable.
\end{theorem}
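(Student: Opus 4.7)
The plan is to exhibit a Lyapunov function on a strong neighborhood of $P^*$ via a Kullback--Leibler-type functional adapted to the atomic weights, mirroring the standard finite-dimensional argument while taking care of the mass that can leak onto strategies outside $\{x_1,\ldots,x_k\}$. For $Q$ in a neighborhood small enough that Lemma \ref{abscont} applies, one has $P^* \ll Q$ and in particular $Q(\{x_j\}) > 0$, so the quantity
\[
V(Q) := \sum_{j=1}^k \alpha_j \ln \frac{\alpha_j}{Q(\{x_j\})}
\]
is well defined, with $V(P^*) = 0$.

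The core calculation is that $V$ is non-increasing along replicator trajectories. Since the support of $Q(t)$ coincides with that of $Q(0)$ (by \cite[Lemma~2]{Bom1}, as already used in the theorem of van Veelen and Spreij above), $Q(t)(\{x_j\}) > 0$ for every $t \geq 0$, so differentiation and the atomic equation \eqref{replfinite} give
\[
\frac{d}{dt} V(Q(t)) = -\sum_{j=1}^k \alpha_j\, \sigma(x_j, Q(t)) = -\bigl[E(P^*, Q(t)) - E(Q(t), Q(t))\bigr].
\]
Strong unbeatability of $P^*$, with radius $\epsilon_0$, then forces $\tfrac{d}{dt}V(Q(t)) \leq 0$ whenever $\|Q(t) - P^*\| \leq \epsilon_0$.

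Next I would sandwich $\|Q - P^*\|$ and $V(Q)$ near $P^*$. Writing $Q$ in the form \eqref{nbhd population} with weights $\beta_1,\ldots,\beta_{k+1}$ and normalized restriction $\tilde\beta_j := \beta_j/(1-\beta_{k+1})$, direct algebra yields the decomposition
\[
V(Q) = D_{\mathrm{KL}}(\alpha \,\|\, \tilde\beta) \,-\, \ln(1-\beta_{k+1}),
\]
a sum of two nonnegative terms. Hence $\beta_{k+1} \leq -\ln(1-\beta_{k+1}) \leq V(Q)$ and, by Pinsker's inequality, $\sum_j |\alpha_j - \tilde\beta_j| \leq \sqrt{2\,V(Q)}$. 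Combining these with $\sum_j |\beta_j - \tilde\beta_j| = \beta_{k+1}$ and the upper bound in Lemma \ref{bounds} produces a modulus $\psi$ with $\psi(\delta)\to 0$ as $\delta\to 0^+$ and $\|Q-P^*\| \leq \psi(V(Q))$. In the other direction, the lower bound of Lemma \ref{bounds} makes $V$ continuous at $P^*$, so a small $\|Q(0) - P^*\|$ guarantees a small $V(Q(0))$.

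The main (and essentially only) obstacle is that the monotonicity conclusion of the second paragraph is valid only while the trajectory remains inside the strong unbeatability ball of radius $\epsilon_0$, whereas what is needed is a trapping statement. This is handled by the standard continuation argument. Given $\epsilon \in (0, \epsilon_0)$, choose $\delta > 0$ with $\psi(\delta) < \epsilon$, and then $\eta \in (0, \epsilon)$ small enough that $\|Q(0) - P^*\| < \eta$ implies $V(Q(0)) < \delta$. Let $T := \sup\{t \geq 0 : \|Q(s)-P^*\| \leq \epsilon_0 \text{ for all } s \in [0,t]\}$. On $[0,T]$, monotonicity gives $V(Q(t)) \leq V(Q(0)) < \delta$, so $\|Q(t) - P^*\| \leq \psi(\delta) < \epsilon < \epsilon_0$. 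If $T$ were finite, strong continuity of $t\mapsto Q(t)$ would force $\|Q(T)-P^*\| = \epsilon_0$, contradicting the bound just obtained. Therefore $T = \infty$ and $\|Q(t)-P^*\| < \epsilon$ for every $t \geq 0$, establishing Lyapunov stability.
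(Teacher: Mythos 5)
Your proposal is correct and follows essentially the same route as the paper: the Kullback--Leibler functional $V(Q)=\sum_j \alpha_j\ln(\alpha_j/Q(\{x_j\}))$, the computation $\dot V(Q)=-[E(P^*,Q)-E(Q,Q)]$ via \eqref{replfinite}, and a Pinsker-type lower bound making $V$ positive definite. The only cosmetic differences are that you re-derive the coercivity bound through the decomposition $V=D_{\mathrm{KL}}(\alpha\,\|\,\tilde\beta)-\ln(1-\beta_{k+1})$ instead of applying Pinsker's inequality directly to $\|Q-P^*\|^2\leq V(Q)$, and that you inline the trapping/continuation argument that the paper delegates to Theorem \ref{Lyapunov Stability} in the Appendix.
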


\begin{proof}
 Let the polymorphic population state $P^*$ be strongly unbeatable. Then, there exists $\epsilon > 0$ such that for $R (\not = P^*)$, with $\|R-P^*\| \leq \epsilon$,
 \[
 E(P^*,R) \geq E(R,R)
 \]
 Let $\delta < 2 \min \{\alpha_1, \alpha_2, \cdots, \alpha_k\}$, $\theta = \min \{\epsilon, \delta\}$ and $\Omega = \{Q \in \Delta : \|Q-P^*\| < \theta\}$.

 By the definition of $\theta$, it follows from Lemma \ref{abscont} that $P^*$ is absolutely continuous with respect to $Q$, for every $Q \in \Omega$. Therefore, for every $Q \in \Omega$ and $B \in \mathcal{B}$, we have
 \[
    P^*(B) = \int_B \frac{dP^*}{dQ}~dQ.
 \]
 Putting $B = \{x_j\}$, we get,
 \begin{equation}\label{density values}
 \frac{dP^*}{dQ}(x_j) = \frac{\alpha_j}{Q(\{x_j\})};~~~~j=1,2,\cdots,k.
 \end{equation}
 Define $V:\Omega \to \mathbb{R}$ by,
 \begin{equation}\label{lyapunov fn}
   V(Q) = \int_S \ln \left ( \frac{dP^*}{dQ} \right ) ~dP^*.
 \end{equation}
 Since $P^*$ is polymorphic, using \eqref{density values}, we can rewrite \eqref{lyapunov fn} as follows.
 \begin{equation}\label{lyapunov fn 1}
    V(Q) = \sum_{j=1}^k \alpha_j ~ \ln \left ( \frac{dP^*}{dQ}(x_j) \right ) = \sum_{j=1}^k \alpha_j ~ \ln \left ( \frac{\alpha_j}{Q(\{x_j\})} \right ).
 \end{equation}
 Using continuity of the log function, one may show that $V$ is continuous in $\Omega$. Moreover, $V(P^*) = 0$ and for $Q \in \Omega$ such that $Q \not = P^*$, we have,
    \begin{align*}
        V(Q) & = \sum_{j=1}^k \alpha_j \ln \left ( \frac{\alpha_j}{Q(\{x_j\})} \right )\\
        & = - \sum_{j=1}^k \alpha_j \ln \left ( \frac{Q(\{x_j\})}{\alpha_j} \right )\\
        & > - \sum_{j=1}^k \alpha_j \left ( \frac{Q(\{x_j\})}{\alpha_j} - 1 \right )~~~~(\because ln(z) < z - 1 ~~\mbox{for}~~z \not = 1)\\
        & = 1 - \sum_{j=1}^k Q(\{x_j\})~~~\geq 0.
    \end{align*}
 Thus, $V(Q) \geq 0$ and the equality holds if and only if $Q = P^*$; in other words, $V$ is positive definite.

 From Pinsker's inequality ( see \cite[(3.3.6) and (3.3.9)]{Reiss} and  \cite[Lemma 3] {Bom1}) we see that $\|Q-P^*\|^2 \leq V(Q)$ for every $Q \in \Omega$.

 Now, let $Q(t)$ be the trajectory of the replicator dynamics with the initial population state as $Q \in \Omega$. Then,
    \begin{align*}
        \frac{d}{dt}V(Q(t)) & = \frac{d}{dt} \left ( \sum_{j=1}^k \alpha_j~\ln \left (\frac{\alpha_j}{Q(t)(\{x_j\})} \right ) \right ) ~~~~~ \mbox{(from \eqref{lyapunov fn 1})}\\
        & = - \sum_{j=1}^k \alpha_j \frac{d}{dt} \left ( \ln \left( \frac{Q(t)(\{x_j\})}{\alpha_j} \right) \right )\\
        & = - \sum_{j=1}^k \alpha_j~ \frac{Q'(t)(\{x_j\})}{Q(t)(\{x_j\})}\\
        & = - \sum_{j=1}^k \alpha_j~ \sigma(x_j, Q(t))~~~~~~\mbox{(from \eqref{replfinite})}\\
        & = - \sum_{j=1}^k \alpha_j~ [E(\delta_{x_j},Q(t)) - E(Q(t),Q(t))]\\
        & = - E(P^*,Q(t)) + E(Q(t),Q(t)).
    \end{align*}
Therefore,
    \[
    \dot{V}(Q) = - E(P^*,Q) + E(Q,Q).
    \]
    Since, $P^*$ is strongly unbeatable, $\dot{V}(Q) \leq 0$ for any $Q \in \Omega$ which proves that $V$ is non-increasing along replicator dynamics trajectories.

    Thus, by Theorem \ref{Lyapunov Stability} (in the Appendix), we can conclude that $P^*$ is Lyapunov stable.
\qed
\end{proof}

The above theorem establishes the Lyapunov stability of unbeatable polymorphic population states. We next prove a result regarding their asymptotic stability.

\begin{theorem}\label{Asymptotic Stability of P^*}
Let $P^*$ be the polymorphic population state as in \eqref{P}. If $P^*$ is strongly uninvadable then $P^*$ is asymptotically stable.
\end{theorem}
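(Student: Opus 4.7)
The plan is to build on Theorem \ref{Lyapunov Stability of P^*}: since strong uninvadability trivially implies strong unbeatability, $P^*$ is already Lyapunov stable, and it only remains to show strong attraction. I would re-use the Lyapunov function $V(Q) = \sum_{j=1}^{k} \alpha_j \ln(\alpha_j / Q(\{x_j\}))$ from the preceding theorem. Lyapunov stability confines the trajectory to a small neighbourhood $\Omega$ of $P^*$ on which $V$ is finite, non-negative, zero only at $P^*$, and non-increasing along the trajectory, so $V(Q(t)) \searrow V_\infty \geq 0$. By Pinsker's inequality (equivalently, Remark \ref{convergence of weights}), it then suffices to show $V_\infty = 0$.

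The main tool would be Barbalat's lemma. Because $V(Q(t)) \leq V(Q(0))$, each weight $\beta_j(t) := Q(t)(\{x_j\})$ stays bounded below by some $c_0 > 0$, so the weight vector $\beta(t)$ lives in the compact set $K = [c_0, 1]^{k} \subset \mathbb{R}^k$. The derivative $\dot V(Q(t)) = E(Q(t), Q(t)) - E(P^*, Q(t))$ is Lipschitz in $t$: $Q \mapsto E(P^*, Q) - E(Q, Q)$ is Lipschitz in variational norm since $u$ is bounded, and $t \mapsto Q(t)$ is Lipschitz since the replicator right-hand side $F(Q)$ is uniformly bounded. Combined with $\int_0^\infty(-\dot V(Q(t)))\,dt = V(Q(0)) - V_\infty < \infty$, Barbalat's lemma yields $E(P^*,Q(t)) - E(Q(t), Q(t)) \to 0$. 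Assuming for contradiction that $V_\infty > 0$, extract $t_n \to \infty$ with $\beta(t_n) \to \beta^* \in K$; since $V$ depends continuously on the weights and $V(Q(t_n)) \to V_\infty > 0$, we must have $\beta^* \neq \alpha$. Pass to a further subsequence so the bounded quantities $\int u(x_j, w)\,R(t_n)(dw)$, $\int u(w, x_j)\,R(t_n)(dw)$, and $\int\int u\,dR(t_n)\,dR(t_n)$ all converge. In the easy subcase $\sum_j \beta_j^* = 1$, the off-support mass $\beta_{k+1}(t_n)$ tends to $0$, so $Q(t_n)$ converges strongly to $Q^{**} := \sum_j \beta_j^* \delta_{x_j} \neq P^*$ inside the strong-uninvadability ball, and variational continuity of the excess forces $E(P^*, Q^{**}) = E(Q^{**}, Q^{**})$, contradicting strong uninvadability.

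The main obstacle I foresee is the complementary subcase $\sum_j \beta_j^* < 1$, where the off-support mass $\beta_{k+1}^* > 0$ persists in the limit and the off-support conditional distributions $R(t_n)$ need not converge in variational norm. My plan is to use the converged moments to manufacture a limit probability measure $R^*$ supported off $\{x_1, \ldots, x_k\}$ (for instance via a Prokhorov-tightness argument on $\{R(t_n)\}$, or directly by realising the prescribed moments) so that $Q^{**} := \sum_j \beta_j^* \delta_{x_j} + \beta_{k+1}^* R^*$ inherits the limit identity $E(P^*, Q^{**}) = E(Q^{**}, Q^{**})$ and again contradicts strong uninvadability; failing that, one can exploit the off-support replicator equation $\beta_{k+1}'(t) = \beta_{k+1}(t) \int \sigma(z, Q(t))\,R(t)(dz)$ together with strong uninvadability tested against perturbations of the form $(1-\eta)P^* + \eta R$ to show that the integrand is uniformly negative near $P^*$, forcing $\beta_{k+1}(t) \to 0$ and reducing this case to the previous one. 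Once this is done, $V_\infty = 0$ and hence $\|Q(t) - P^*\| \to 0$, completing the proof of asymptotic stability.
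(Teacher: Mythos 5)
Your overall architecture (get Lyapunov stability from Theorem \ref{Lyapunov Stability of P^*}, then show $V_\infty=0$ for the relative-entropy function $V$) matches the paper, and the Barbalat step itself is sound: for bounded $u$ both $t\mapsto Q(t)$ and $Q\mapsto E(P^*,Q)-E(Q,Q)$ are Lipschitz in variational norm, so $\dot V(Q(t))\to 0$ does follow. The genuine gap is exactly where you locate it, and none of your three proposed repairs closes it. The case $\sum_j\beta_j^*<1$ is not a corner case but the heart of the theorem, and turning $E(P^*,Q(t_n))-E(Q(t_n),Q(t_n))\to 0$ into a contradiction with strong uninvadability requires variational convergence of the off-support parts $R(t_n)$, which is unavailable. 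Prokhorov gives at best weak convergence, while the paper assumes $u$ only bounded and measurable, so the excess payoff is not weakly continuous (and a weak limit of the $R(t_n)$ may in addition charge $\{x_1,\dots,x_k\}$). ``Realising the prescribed moments'' fails on the quadratic term: a measure $R^*$ matching the limits of the linear functionals $\int u(x_j,\cdot)\,dR(t_n)$ and $\int u(\cdot,x_j)\,dR(t_n)$ need not realise $\lim\iint u\,dR(t_n)\,dR(t_n)$. And the uniform negativity of $\int\sigma(z,Q(t))\,R(t)(dz)$ demanded by your third fallback is a quantitative, negative-definiteness-type condition that strong uninvadability does not supply: it is a strict but non-uniform inequality, and it compares $P^*$ against $Q$, not the off-support average against the population average.

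The paper's proof avoids the limit passage entirely, and this is the idea your proposal is missing: since $V(Q)=\sum_j\alpha_j\ln\bigl(\alpha_j/Q(\{x_j\})\bigr)$ depends only on the $k$ atom weights, the convergence $Q(t_n)(\{x_j\})\to\beta_j^*$ already yields $V(Q(t_n))\to V(Q^*)$ for \emph{every} state $Q^*$ carrying weights $\beta_j^*$ on the $x_j$, irrespective of its (non-convergent) off-support part. The paper then applies the abstract Theorem \ref{Asymptotic Stability}, whose hypothesis (iv) asks only for the short-time flow estimate $|V(Q(s;Q^*))-V(Q(s;Q(t_n)))|\to 0$ as $s\downarrow 0$, $n\uparrow\infty$; this is checked from boundedness of $\sigma$ alone via the exponential solution formula \eqref{replfinite}, and the strict decrease of $V$ along the trajectory started at $Q^*$ then forces $Q^*=P^*$, hence $\beta_j^*=\alpha_j$ for all $j$, $V_\infty=0$, and Pinsker's inequality finishes. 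To salvage your route you would need either to assume $u$ continuous (so the weak-convergence argument can work) or to find a substitute for the missing compactness; as written, the proposal does not prove the theorem.
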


\begin{proof}

Let the polymorphic population state $P^*$ be strongly uninvadable. Then there exists $\epsilon > 0$ such that for all $R (\not = P^*)$ with $\|R-P^*\| \leq \epsilon$,
\[
E(P^*,R) > E(R,R).
\]
We can define $\Omega$ and the function $V$ as in the proof of Theorem \ref{Lyapunov Stability of P^*} where $V$ is a positive definite continuous function for which
\[
    \dot{V}(Q) = - E(P^*,Q) + E(Q,Q)
\]
for every $Q \in \Omega$. Since, $P^*$ is strongly uninvadable, $\dot{V}(Q) < 0$ for any $Q \in \Omega$, $Q \not = P^*$ which proves that $V$ is strictly decreasing along replicator dynamics trajectories which remain in $\Omega$.\\ \\*
Now, for any $0 < \epsilon_1 < \theta$, by Theorem \ref{Lyapunov Stability of P^*}, there exists $\delta_1 > 0$ such that every trajectory starting from the open ball centered at $P^*$ with radius $\delta_1$ (denoted by $B(P^*,\delta_1)$), will remain in $B(P^*,\frac{\epsilon_1}{2k})$.\\ \\*
Consider the trajectory $Q(t) = Q(t;Q_0)$ starting from $Q_0 \in B(P^*,\delta_1)$. For this trajectory $Q(t)$, clearly, there exists a sequence $t_n \to \infty$ such that $Q(t_n)(\{x_j\})$ converges to a limit, say $\beta_j^*$; $j = 1,2,\cdots,k$.\\ \\*
Since $Q(t_n) \in B(P^*,\frac{\epsilon_1}{2k})$, it follows from \eqref{bounds wrt P^*} that $|\alpha_j - \beta_j^*| \leq \frac{\epsilon_1}{2k}$ for every $j = 1,2,\cdots,k$ and hence $\sum \limits_{j=1}^k |\alpha_j - \beta_j^*| \leq \frac{\epsilon_1}{2} < \theta$. In particular, by the definition of $\theta$ we now have $\beta_j^* > 0$ for every $j = 1,2,\cdots,k$.

This implies that
\[ V(Q(t_n))  = \sum_{j=1}^k \alpha_j \ln \left ( \frac{\alpha_j}{Q(t_n)(\{x_j\})} \right )
\]
converges to
\[ V(Q^*) = \sum_{j=1}^k \alpha_j \ln \left ( \frac{\alpha_j}{\beta_j^*} \right )
\]
for any (fixed) $Q^* \in \Lambda \subset \Omega$ where
\[
\Lambda = \left \{Q \in \Omega ~\mid~ Q = \sum_{j=1}^k \beta_j^* \delta_{x_j} + \left ( 1 - \sum_{j=1}^k \beta_j^* \right ) R; ~~R(\{x_1,x_2,\cdots,x_k\}) = 0 \right \}.
\]
For $s > 0$, by the replicator dynamics equations \eqref{replfinite}, we know that
\begin{align}\label{initial at Q^*}
Q(s;Q^*)(\{x_j\}) & = \beta_j^* ~ \exp\left(\int_0^s \sigma\left(x_j,Q(t;Q^*) \right)~dt \right) \nonumber\\
& = \beta_j^* ~T(s)
\end{align}
and
\begin{align}\label{initial at Q(t_n)}
Q(s;Q(t_n))(\{x_j\}) & = Q(t_n)(\{x_j\}) ~ \exp\left(\int_0^s \sigma\left(x_j,Q(t;Q(t_n)) \right)~dt \right) \nonumber\\
& = Q(t_n)(\{x_j\}) ~T_n(s).
\end{align}
Therefore we have,
\begin{align}
& | V(Q(s;Q^*)) - V(Q(s,Q(t_n))) |\nonumber\\[2mm]
 = &\Bigg | \sum_{j=1}^k \alpha_j \ln \left ( \frac{\alpha_j}{\beta_j^* ~T(s)}\right ) - \sum_{j=1}^k \alpha_j \ln \left ( \frac{\alpha_j}{Q(t_n)(\{x_j\}) ~T_n(s)}\right ) \Bigg | \nonumber\\[2mm]
 = & \sum_{j=1}^k \alpha_j \Bigg | \ln \left ( \frac{Q(t_n)(\{x_j\}) ~T_n(s)}{\beta_j^* ~T(s)} \right ) \Bigg | \label{V diff form}
\end{align}
Since $\sigma(\cdot,Q)$ is bounded, it follows that $\dfrac{T_n(s)}{T(s)} \to 1$ (uniformly in $n$) as $s \downarrow 0$ and hence from \eqref{V diff form}, we get,
\[
\lim_{s \downarrow 0 ,~ n \uparrow \infty} | V(Q(s;Q^*)) - V(Q(s,Q(t_n))) | = 0.
\]
Thus, by Theorem \ref{Asymptotic Stability} (in Appendix), we can conclude that $P^*$ is asymptotically stable.
\qed
\end{proof}

In the next section we make some concluding remarks which is followed by Appendix that gives complete proof of general Lyapunov stability results.

\section{Conclusions}

In this article, we studied the stability of polymorphic population states in games with continuous strategy spaces. We proved that strong uninvadability is a sufficient condition for asymptotic stability of a polymorphic population state whereas, strong unbeatability is enough for the Lyapunov stability. Beyond finitely supported population states, one cannot establish similar stability results unless we weaken the notion of stability. This is an interesting future research topic in games with continuous strategy spaces.

\section*{Appendix}

Here we establish two abstract stability theorems used to prove our main theorems in Section 3. To this end, we consider an abstract differential equation
\begin{equation}\label{Differential Equation}\tag{A.1}
\phi'(t) = H(\phi(t))
\end{equation}
on a Banach space $(X,\|\cdot\|_X)$. It is assumed that for each initial condition $\phi_0$ in an invariant set $Y \subset X$, the differential equation \eqref{Differential Equation} has a unique solution $\phi(t) = \phi(t;\phi_0)$ defined for every $t \geq 0$. We want to analyze this system around a rest point $\phi^* \in Y$. We recall the definition of $\mathcal{K}_0^\infty$ functions:
\begin{align*}
\mathcal{K}_0^\infty  & = \{\omega : [0,\infty) \to [0,\infty) ~\mid ~\omega \mbox{ is strictly increasing, continuous, }\\
& ~~~~~~~~~~~~~\omega(0) = 0 \mbox{ and } \lim_{s \to \infty} \omega(s) = \infty \}.
\end{align*}

\begin{theorem}\label{Lyapunov Stability}
Let $\Omega$ be an open subset of $Y$ containing the rest point $\phi^*$ of \eqref{Differential Equation}. Assume that $V : \Omega \to \mathbb{R}$  is continuous at $\phi^*$ and satisfies
\begin{itemize}
\item[(i)] $V(\phi) \geq 0$ on $\Omega$ and $V(\phi^*) = 0$;
\item[(ii)] there exists $\omega \in \mathcal{K}_0^\infty$ such that $w(\|\phi - \phi^*\|_X) \leq V(\phi)$ for all $\phi \in \Omega$;
\item[(iii)] $V$ is non increasing along trajectories of \eqref{Differential Equation} that lie in $\Omega$.
\end{itemize}
Then $\phi^*$ is Lyapunov stable.
\end{theorem}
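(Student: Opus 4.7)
The plan is the classical Lyapunov argument by contradiction, adapted to the Banach-space setting. Fix $\epsilon > 0$; the goal is to produce $\eta > 0$ so that every trajectory launched from the $\eta$-ball around $\phi^*$ stays in the $\epsilon$-ball forever. Without loss of generality I would first shrink $\epsilon$ so that the closed ball $\{\phi \in Y : \|\phi - \phi^*\|_X \leq \epsilon\}$ lies inside $\Omega$; this is possible since $\Omega$ is open in $Y$ and contains $\phi^*$. Next, set $c := \omega(\epsilon)$, which is strictly positive because $\omega \in \mathcal{K}_0^\infty$ is strictly increasing from $\omega(0) = 0$. Finally, use the continuity of $V$ at $\phi^*$ together with $V(\phi^*) = 0$ to pick $\eta \in (0,\epsilon]$ such that $\|\phi_0 - \phi^*\|_X < \eta$ forces both $\phi_0 \in \Omega$ and $V(\phi_0) < c$.

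Now fix $\phi_0 \in Y$ with $\|\phi_0 - \phi^*\|_X < \eta$ and let $\phi(t) = \phi(t;\phi_0)$ be the trajectory. Suppose for contradiction that $\|\phi(t) - \phi^*\|_X \geq \epsilon$ for some $t > 0$. Since $\phi(\cdot)$ is continuous in $t$ as a solution of \eqref{Differential Equation} in the Banach space $X$, and since $\|\phi(0) - \phi^*\|_X < \eta \leq \epsilon$, there is a first time $t_1 > 0$ with $\|\phi(t_1) - \phi^*\|_X = \epsilon$. For every $t \in [0,t_1]$ the point $\phi(t)$ lies in the closed ball of radius $\epsilon$ around $\phi^*$, which by the initial reduction is contained in $\Omega$, so both hypotheses (ii) and (iii) apply throughout $[0,t_1]$. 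Hypothesis (iii) then gives
\[
V(\phi(t_1)) \leq V(\phi_0) < c,
\]
while hypothesis (ii) gives
\[
V(\phi(t_1)) \geq \omega(\|\phi(t_1) - \phi^*\|_X) = \omega(\epsilon) = c,
\]
which is a contradiction. Hence $\|\phi(t) - \phi^*\|_X < \epsilon$ for all $t \geq 0$.

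The only subtlety I anticipate is ensuring that the whole trajectory segment up to the first exit time actually lies inside $\Omega$, so that the Lyapunov hypotheses can be invoked at $t_1$ and not merely for $t < t_1$; this is precisely why I reduce $\epsilon$ at the outset to keep the closed ball of radius $\epsilon$ within $\Omega$. Beyond that, the mechanism is transparent: $\omega \in \mathcal{K}_0^\infty$ supplies a uniform lower bound $\omega(\epsilon)$ on $V$ at any point on the sphere of radius $\epsilon$, while continuity of $V$ at $\phi^*$ propagates a strictly smaller upper bound forward in time via monotonicity along trajectories, making exit from $B(\phi^*,\epsilon)$ impossible.
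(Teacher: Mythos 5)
Your proof is correct and follows essentially the same route as the paper's: continuity of $V$ at $\phi^*$ yields a small sublevel bound $V(\phi_0)<\omega(\epsilon)$ near the rest point, monotonicity along trajectories propagates it forward, and the $\mathcal{K}_0^\infty$ lower bound converts it back into a norm bound. The only difference is presentational: you run a first-exit-time contradiction where the paper invokes invariance of the sublevel set $U=\{\phi\in\Omega : V(\phi)<\omega(\epsilon)\}$, and your version in fact handles the ``trajectory segment stays in $\Omega$'' issue more explicitly than the paper's ``without loss of generality the closure of $U$ lies in $\Omega$'' step.
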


\begin{proof}
Let $B(\phi^*,\epsilon)$ be the open ball around $\phi^*$ with radius $\epsilon$. Let $\epsilon > 0$ be small enough such that the closure of $B(\phi^*,\epsilon)$ is contained in $\Omega$.

By continuity of $V$ at $\phi^*$ and condition $(i)$, there exists $\delta > 0$ such that $V(\phi) < \omega(\epsilon)$ whenever, $\phi \in B(\phi^*,\delta)$.

Clearly, by condition $(iii)$ the set $U = \{ \phi \in \Omega ~|~ V(\phi) < \omega(\epsilon) \}$ is invariant. Without loss of generality we can assume that closure of $U$ is a subset of $\Omega$. Therefore, for every $\phi_0 \in B(\phi^*,\delta)$, the trajectory $\phi(t) = \phi(t;\phi_0)$ lies in $U$ and hence,
\[
\omega \left (\|\phi(t) - \phi^*\|_X \right ) \leq V(\phi(t)) < \omega(\epsilon).
\]
As $\omega \in \mathcal{K}_0^\infty$, it is invertible and from above it follows that
\[
\|\phi(t) - \phi^*\|_X < \epsilon.
\]
Thus we have proved that the trajectory $\phi(t)$ lies in $B(\phi^*,\epsilon)$ whenever $\phi_0 \in B(\phi^*,\delta)$.
\qed
\end{proof}

\begin{theorem}\label{Asymptotic Stability}
Let $\Omega$ be an open subset of $Y$ containing the rest point $\phi^*$ of \eqref{Differential Equation}. Assume that $V : \Omega \to \mathbb{R}$  is continuous on $\Omega$ and satisfies
\begin{itemize}
\item[(i)] $V(\phi) \geq 0$ on $\Omega$ and $V(\phi^*) = 0$;
\item[(ii)] there exists $\omega \in \mathcal{K}_0^\infty$ such that $w(\|\phi - \phi^*\|_X) \leq V(\phi)$ for all $\phi \in \Omega$;
\item[(iii)] $V$ is strictly decreasing along trajectories of \eqref{Differential Equation} that lie in $\Omega \setminus \{\phi^*\}$;
\item[(iv)] there exists $\delta_1 > 0$ such that for every trajectory $\phi(t)$ emanating from $B(\phi^*,\delta_1)$, there exists a sequence $t_n \to \infty$ such that $V(\phi(t_n))$ converges to $V(\psi)$ for some $\psi \in \Omega$ and
    \[\lim_{s \downarrow 0 ,~ n \uparrow \infty} |V(\phi(s;\psi)) - V(\phi(s,\phi(t_n)))| = 0.
    \]
\end{itemize}
Then $\phi^*$ is asymptotically stable.
\end{theorem}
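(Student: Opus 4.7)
My plan is to obtain Lyapunov stability directly from Theorem~\ref{Lyapunov Stability} and then run a LaSalle-style invariance argument for strong attraction, with condition~(iv) playing the role of continuous dependence on initial data.

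Conditions (i)--(iii) of the present theorem are stronger than the hypotheses of Theorem~\ref{Lyapunov Stability} (strict decrease along trajectories is a special case of non-increase), so $\phi^*$ is Lyapunov stable. This lets me pick $\delta \in (0, \delta_1]$ small enough that every trajectory starting in $B(\phi^*, \delta)$ remains inside a pre-assigned neighbourhood of $\phi^*$ contained in $\Omega$. Fix $\phi_0 \in B(\phi^*, \delta)$ and set $\phi(t) = \phi(t; \phi_0)$. If the trajectory reaches $\phi^*$ in finite time we are done; otherwise $\phi(t) \in \Omega \setminus \{\phi^*\}$ for every $t \geq 0$, so by (i) and (iii) the map $V(\phi(t))$ is non-negative and strictly decreasing, hence converges to some $L \geq 0$. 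Because $\omega \in \mathcal{K}_0^\infty$ is strictly increasing with $\omega(0)=0$, (ii) reduces the whole theorem to the claim $L = 0$: then $\omega(\|\phi(t)-\phi^*\|_X) \leq V(\phi(t)) \to 0$ forces $\|\phi(t)-\phi^*\|_X \to 0$. Apply (iv) to extract $t_n \to \infty$ and $\psi \in \Omega$ with $V(\phi(t_n)) \to V(\psi)$, so $V(\psi) = L$.

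Suppose for contradiction $L > 0$, hence $\psi \neq \phi^*$. Openness of $\Omega$ and continuity of the flow produce $s^* > 0$ on which $\phi(\cdot\,;\psi)$ stays in $\Omega \setminus \{\phi^*\}$, and (iii) then makes $s \mapsto V(\phi(s;\psi))$ strictly decreasing on $[0,s^*]$, giving $V(\phi(s;\psi)) < L$ for every $s \in (0, s^*]$. On the other hand, the semigroup identity $\phi(s;\phi(t_n)) = \phi(t_n+s;\phi_0)$ combined with the monotone convergence $V(\phi(t)) \downarrow L$ forces $V(\phi(s;\phi(t_n))) \geq L$ for all $s \geq 0$ and $n$. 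Condition~(iv) supplies, for each $\epsilon > 0$, a pair $\delta_\epsilon > 0, N_\epsilon$ with $|V(\phi(s;\psi)) - V(\phi(s;\phi(t_n)))| < \epsilon$ whenever $s \in (0, \delta_\epsilon)$ and $n \geq N_\epsilon$. Choosing $s_0 \in (0, \min(s^*, \delta_\epsilon))$ with $L - V(\phi(s_0;\psi)) > \epsilon$ and any $n \geq N_\epsilon$ then yields $V(\phi(s_0;\phi(t_n))) < V(\phi(s_0;\psi)) + \epsilon < L$, which contradicts the lower bound $V(\phi(s_0;\phi(t_n))) \geq L$. Hence $L = 0$, and (ii) delivers the desired attraction.

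The principal technical obstacle is the parameter matching in the last step: securing an admissible $s_0 \in (0, \delta_\epsilon)$ at which the strict-decrease gap $L - V(\phi(s_0;\psi))$ exceeds~$\epsilon$ requires the modulus of decrease of $V(\phi(\cdot\,;\psi))$ near~$0$ to dominate the contraction of $\delta_\epsilon$ as $\epsilon \downarrow 0$. In the polymorphic application of Theorem~\ref{Asymptotic Stability of P^*}, this compatibility is ensured by decomposing the bound on $|V(\phi(s;\psi)) - V(\phi(s;\phi(t_n)))|$ into an $s$-independent term that vanishes as $n \to \infty$ (from $Q(t_n)(\{x_j\}) \to \beta_j^*$) plus an $s$-linear term uniform in $n$ (from the boundedness of $\sigma$), so one fixes $n$ large enough to kill the first contribution and then shrinks $s_0$ to control the second. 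The abstract proof proceeds by the same two-step diagonal extraction.
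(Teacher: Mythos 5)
Your proposal is correct and follows essentially the same route as the paper's own proof: Lyapunov stability via Theorem~\ref{Lyapunov Stability}, monotone convergence of $V(\phi(t))$ to a limit $L=V(\psi)$ extracted through condition~(iv), and a contradiction from $V(\phi(s;\psi))<L\leq V(\phi(t_n+s;\phi_0))$ combined with the closeness asserted in~(iv). You are in fact more explicit than the paper about the only delicate point --- matching the $\epsilon$ in~(iv) against the decrease gap $L-V(\phi(s_0;\psi))$ as $s_0\downarrow 0$ --- which the paper's proof passes over silently and which, as you note, is what the concrete decomposition in Theorem~\ref{Asymptotic Stability of P^*} actually supplies.
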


\begin{proof}
As the Lyapunov stability follows from the above theorem it remains to show that $\phi^*$ is attracting.

Let $B(\phi^*,\delta)$ and $B(\phi^*,\epsilon)$ be as defined in the proof of the above theorem. Without loss of generality, we may assume that $\delta \leq \epsilon$. Similarly, there exists $\delta_2 > 0$ such that all trajectories emanating from $B(\phi^*,\delta_2)$ lie in $B(\phi^*,\frac{\delta}{2})$.\\ \\*
Let $\bar{\delta} = \min\{\delta_1,\delta_2\}$ and $\phi(t) = \phi(t;\phi_0)$ be the trajectory of the differential equation \eqref{Differential Equation} with the initial condition $\phi_0 \in B(\phi^*,\bar{\delta})$. Then, by condition $(iv)$, there exists a sequence $t_n \to \infty$ such that $V(\phi(t_n))$ converges to $V(\psi)$ for some $\psi \in \Omega$.\\ \\*
We need to show that $\psi = \phi^*$. By condition $(iii)$, $V(\phi(t)) > V(\psi)$ for every $t \geq 0$.

If $\psi \not = \phi^*$, let $\psi(t) = \phi(t;\psi)$. For any $t > 0$, $V(\psi(t)) < V(\psi)$. By condition $(iv)$,
\[\lim_{s \downarrow 0 ,~ n \uparrow \infty} |V(\phi(s;\psi)) - V(\phi(s,\phi(t_n)))| = 0.
    \]
and hence
\[
V(\phi(s,\phi(t_n))) < V(\psi)
\]
for $s > 0$ small enough and $n$ large enough which is a contradiction because $\phi(s,\phi(t_n)) = \phi(s+t_n;\phi_0)$. Hence, $\psi = \phi^*$.
\qed
\end{proof}

{\bf{Acknowledgements}}

The authors acknowledge the financial support of NBHM through the project ``Evolutionary Stability in Games with Continuous Action Spaces".

\section*{References}
\bibliography{References}

\end{document}